\def\RSsubtxt{section~}\newref{sub}{name = \RSsubtxt}}
\def\RSthmtxt{theorem~}\newref{thm}{name = \RSthmtxt}}
\def\RSlemtxt{lemma~}\newref{lem}{name = \RSlemtxt}}
\theoremstyle{plain}
\numberwithin{equation}{section}
\numberwithin{figure}{section}
\numberwithin{table}{section}
    \newtheorem{stthm}{\protect\theoremname}[section]
    \newtheorem{spprop}{\protect\propositionname}[section]
 \theoremstyle{plain}
 \newtheorem*{cor*}{\protect\corollaryname}
 \theoremstyle{remark}
 \newtheorem{srrem}{\protect\remarkname}[section]
 \theoremstyle{plain}
 \newtheorem{sllem}{\protect\lemmaname}[section]
 \theoremstyle{remark}
 \newtheorem*{rem*}{\protect\remarkname}
 \theoremstyle{plain}
 \newtheorem*{thm*}{\protect\theoremname}
\newcommand\mynobreakpar{\par\nobreak\@afterheading}
 \providecommand{\corollaryname}{Corollary}
 \providecommand{\lemmaname}{Lemma}
 \providecommand{\propositionname}{Proposition}
 \providecommand{\remarkname}{Remark}
 \providecommand{\theoremname}{Theorem}
\begin{document}
\def\rightmark{ON ESTIMATES FOR WEIGHTED BERGMAN PROJECTIONS}
\def\leftmark{P. CHARPENTIER, Y. DUPAIN \& M. MOUNKAILA}
\def\RSsectxt{Section~}%

\title{On estimates for weighted Bergman projections}

\author{P. Charpentier, Y. Dupain \& M. Mounkaila}
\begin{abstract}
In this note we show that the weighted $L^{2}$-Sobolev estimates
obtained by P. Charpentier, Y. Dupain \& M. Mounkaila for the weighted
Bergman projection of the Hilbert space $L^{2}\left(\Omega,d\mu_{0}\right)$
where $\Omega$ is a smoothly bounded pseudoconvex domain of finite
type in $\mathbb{C}^{n}$ and $\mu_{0}=\left(-\rho_{0}\right)^{r}d\lambda$,
$\lambda$ being the Lebesgue measure, $r\in\mathbb{Q}_{+}$ and $\rho_{0}$
a special defining function of $\Omega$, are still valid for the
Bergman projection of $L^{2}\left(\Omega,d\mu\right)$ where $\mu=\left(-\rho\right)^{r}d\lambda$,
$\rho$ being any defining function of $\Omega$. In fact a stronger directional Sobolev
estimate is established. Moreover similar generalizations are
obtained for weighted $L^{p}$-Sobolev and lipschitz estimates in
the case of pseudoconvex domain of finite type in $\mathbb{C}^{2}$
and for some convex domains of finite type.
\end{abstract}

\keywords{pseudo-convex, finite type, Levi form locally diagonalizable, convex,
weighted Bergman projection, $\overline{\partial}_{\varphi}$-Neumann problem}

\subjclass[2000]{32F17, 32T25, 32T40}

\address{P. Charpentier, Universit\'e Bordeaux I, Institut de Math\'ematiques
de Bordeaux, 351, Cours de la Lib\'eration, 33405, Talence, France}

\address{M. Mounkaila, Universit\'e Abdou Moumouni, Facult\'e des Sciences,
B.P. 10662, Niamey, Niger}

\email{P. Charpentier: philippe.charpentier@math.u-bordeaux1.fr}

\email{M. Mounkaila: modi.mounkaila@yahoo.fr}

\maketitle

\section{Introduction}

Let $\Omega$ be a smoothly bounded domain in $\mathbb{C}^{n}$. A non
negative measurable function $\nu$ on $\Omega$ is said to be an admissible
weight if the space of holomorphic functions square integrable for
the measure $\nu d\lambda$ ($d\lambda$ being the Lebesgue measure)
is a closed subspace of the Hilbert space $L^{2}\left(\nu d\lambda\right)$
of square integrable functions on $\Omega$ (see, for example, \cite{Pas90})
. In complex analysis, $\nu$ being admissible, the regularity of
the Bergman projection associated to $\nu d\lambda$ (i.e. the orthogonal
projection of $L^{2}\left(\nu d\lambda\right)$ onto the subspace
of holomorphic functions) is a fundamental question. It has been intensively
studied when $\nu\equiv1$ and specially when $\Omega$ is pseudoconvex.

If $\eta$ is a smooth strictly positive function on $\overline{\Omega}$
it is well known that the regularity properties of the Bergman projections of
the Hilbert spaces $L^{2}\left(\eta\nu d\lambda\right)$ and $L^{2}\left(\nu d\lambda\right)$
can be very different. For example in \cite{Koh72} J. J. Kohn proved
that if $\Omega$ is pseudoconvex, for any integer $k$ there exists
$t>0$ such that the Bergman projection of $L^{2}\left(e^{-t\left|z\right|^{2}}d\lambda\right)$
maps the Sobolev space $L_{k}^{2}(\Omega)$ into itself, and, in
\cite{Christ96} M. Christ showed that there exists a smoothly bounded
pseudoconvex domain such that the Bergman projection of $L^{2}(\Omega)=L^{2}\left(e^{t\left|z\right|^{2}}e^{-t\left|z\right|^{2}}d\lambda\right)$
is not $L^{2}$-Sobolev regular.

In this paper we show that some of the (weighted) estimates obtained
in \cite{CDM} for pseudoconvex domains of finite type remain true
when the weight is multiplied by a function which is smooth and strictly
positive in $\overline{\Omega}$. This shows that the corresponding
estimates obtained in \cite{CDM} for the Bergman projection of $L^{2}\left(\left(-\rho_{0}\right)^{r}d\lambda\right)$,
where $\rho_{0}$ is a special defining function of $\Omega$ and
$r$ a non negative rational number, are valid for the Bergman projection
of $L^{2}\left(\left(-\rho\right)^{r}d\lambda\right)$ where $\rho$
is any defining function of the domain. Moreover, we show that these Bergman
projections satisfy a stronger directional Sobolev estimate.

\section{\label{sec:Notations-and-main-results}Notations and main results}

Throughout all the paper $d\lambda$ denotes the Lebesgue measure.
Let $D$ be a smoothly bounded open set in $\mathbb{C}^{l}$. Recall
that $d$ is said to be a defining function of $D$ if it is a real
function in $\mathcal{C}^{\infty}\left(\mathbb{C}^{l}\right)$ such
that $D=\left\{ \zeta\in\mathbb{C}^{l}\mbox{ s. t. }d(\zeta)<0\right\} $
and $\nabla d$ does not vanish on $\partial D$.

Let $\nu$ be an admissible weight on $D$.

For $1\leq p<+\infty$ we denote by $L^{p}\left(D,\nu d\lambda\right)$
the $L^{p}$ space for the measure $\nu d\lambda$. When $\nu\equiv1$
we write, as usual, $L^{p}(D)$.

We denote by $P_{\nu}^{D}$ the orthogonal projection of the Hilbert
space $L^{2}\left(D,\nu d\lambda\right)$ (i. e. for the scalar
product $\left\langle f,g\right\rangle =\int_{D}f\overline{g}\nu d\lambda$)
onto the closed subspace of holomorphic functions. If $\nu\equiv1$
we simply write $P^{D}$. In this paper, $P_{\nu}^{D}$ is called
the (weighted) \emph{Bergman projection of} $L^{2}\left(D,\nu d\lambda\right)$.

For $k\in\mathbb{N}$ and $1<p<+\infty$, we define the weighted Sobolev
space $L_{k}^{p}\left(D,\nu d\lambda\right)$ by

\begin{multline*}
L_{k}^{p}\left(D,\nu d\lambda\right)=\bigg\{ u\in L^{p}\left(D,\nu d\lambda\right)\mbox{ such that }\\
\left.\left\Vert u\right\Vert _{L_{k}^{p}\left(D,\nu d\lambda\right)}^{p}=\sum_{\left|\alpha\right|\leq k}\left\Vert D^{\alpha}u\right\Vert _{L^{p}\left(D,\nu d\lambda\right)}^{p}<+\infty\right\} .
\end{multline*}
If $\nu\equiv1$ this space is the classical Sobolev space $L_{k}^{p}\left(D\right)$.

Let $d$ be a smooth defining function of $D$. We denote by $T_{d}$
the vector field
\[
T_{d}=\sum_{i}\frac{\partial d}{\partial\overline{z_{i}}}\frac{\partial}{\partial z_{i}}-\frac{\partial d}{\partial z_{i}}\frac{\partial}{\partial\overline{z_{i}}}.
\]
Thus $T_{d}$ is a vector field tangent to $d$ (i.e. $T_{d}d\equiv0$)
which is transverse to the complex tangent space to $d$ near the
boundary of $D$.

Following a terminology introduced in \cite{HMS14} a vector field
$T$ with coefficients in $\mathcal{C}^{\infty}(\overline{D})$ is
said to be \emph{tangential and complex transversal} to $\partial\Omega$
if it can be written $T=aT_{d}+L$ where $a\in\mathcal{C}^{\infty}(\overline{D})$
is nowhere vanishing on $\partial D$ and $L=L_{1}+\overline{L_{2}}$
where $L_{1}$ and $L_{2}$ are $\left(1,0\right)$-type vector fields
tangential to $\partial D$. Note that this definition is independent
of the choice of the defining function $d$ (see the beginning of
\secref{Proof-of-General-Sobolev-L2}). Then, for all non negative integer $k$, and $1<p<+\infty$,
we denote by $L_{k,T}^{p}\left(D,\nu d\lambda\right)$ the weighted
directional Sobolev space
\begin{multline*}
L_{k,T}^{p}\left(D,\nu d\lambda\right)=\bigg\{ u\in L^{p}\left(D,\nu d\lambda\right)\mbox{ such that }\\
\left.\left\Vert u\right\Vert _{L_{k,T}^{p}\left(D,\nu d\lambda\right)}^{p}=\sum_{l\leq k}\left\Vert T^{l}u\right\Vert _{L^{p}\left(D,\nu d\lambda\right)}^{p}<+\infty\right\} .
\end{multline*}

Our first result extends Theorem 2.2 of \cite{CDM} and, for finite
type domains, Theorem 1.1 obtained by A.-K. Herbig, J. D. McNeal and
E. J. Straube in \cite{HMS14} for the standard Bergman projection:
\begin{stthm}
\label{thm:General_Sobolev_estimates}Let $\Omega$ be a smooth bounded
pseudoconvex domain of finite type in $\mathbb{C}^{n}$. Let $\rho$
be a smooth defining function of $\Omega$. Let $r\in\mathbb{Q}_{+}$,
be a non negative rational number and $\eta\in\mathcal{C}^{\infty}(\overline{\Omega})$,
strictly positive. Let $T$ be a $\mathcal{C}^{\infty}(\overline{\Omega})$
vector field tangential and complex transversal to $\partial\Omega$.
Define $\omega=\eta\left(-\rho\right)^{r}$. Then, for any integer
$k$, $P_{\omega}^{\Omega}$ maps continuously the weighted directional
Sobolev space $L_{k,T}^{2}\left(\Omega,\omega d\lambda\right)$ into
$L_{k}^{2}\left(\Omega,\omega d\lambda\right)$.
\end{stthm}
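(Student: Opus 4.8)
The plan is to reduce the general weight $\omega=\eta(-\rho)^{r}$ to the special weight treated in \cite{CDM}, and then to account separately for the smooth positive factor and for the strengthening from the full to the directional Sobolev hypothesis. First I would use that any two smooth defining functions of $\Omega$ differ by a smooth strictly positive factor: writing $-\rho=h(-\rho_{0})$ with $h\in\mathcal{C}^{\infty}(\overline{\Omega})$, $h>0$, one gets $\omega=\eta\,h^{r}\,(-\rho_{0})^{r}=\widetilde{\eta}\,(-\rho_{0})^{r}$ with $\widetilde{\eta}=\eta h^{r}\in\mathcal{C}^{\infty}(\overline{\Omega})$ strictly positive (here $h^{r}$ is smooth because $h>0$). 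Since the spaces $L^{2}(\Omega,\omega d\lambda)$ and $L^{2}(\Omega,\mu_{0})$, with $\mu_{0}=(-\rho_{0})^{r}d\lambda$, coincide with equivalent norms, and likewise for their directional and full Sobolev refinements, and since the class of tangential complex transversal vector fields does not depend on the defining function, it suffices to pass from $P_{\mu_{0}}^{\Omega}$ to $P_{\widetilde{\eta}\mu_{0}}^{\Omega}$ while upgrading the domain space from $L_{k}^{2}$ to $L_{k,T}^{2}$.

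To isolate the effect of the smooth factor I would use a Toeplitz intertwining identity. Writing $M_{\widetilde{\eta}}$ for multiplication by $\widetilde{\eta}$ and $T_{\widetilde{\eta}}=P_{\mu_{0}}^{\Omega}M_{\widetilde{\eta}}$ for the associated Toeplitz operator on the Bergman space $A^{2}(\mu_{0})$, the orthogonality relations defining the two projections give
\[
P_{\widetilde{\eta}\mu_{0}}^{\Omega}=T_{\widetilde{\eta}}^{-1}\,P_{\mu_{0}}^{\Omega}\,M_{\widetilde{\eta}},
\]
the operator $T_{\widetilde{\eta}}$ being invertible on $A^{2}(\mu_{0})$ because $0<c\leq\widetilde{\eta}\leq C$ makes it self-adjoint and bounded below. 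In this factorisation $M_{\widetilde{\eta}}$ maps $L_{k,T}^{2}(\Omega,\omega d\lambda)$ boundedly into $L_{k,T}^{2}(\Omega,\mu_{0})$: by the Leibniz rule $T^{l}(\widetilde{\eta}u)=\sum_{j}\binom{l}{j}(T^{j}\widetilde{\eta})\,T^{l-j}u$, and the iterated $T$-derivatives of $\widetilde{\eta}$ are bounded.

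The analytic heart is the directional estimate $P_{\mu_{0}}^{\Omega}\colon L_{k,T}^{2}(\Omega,\mu_{0})\to L_{k}^{2}(\Omega,\mu_{0})$, which simultaneously extends the full-Sobolev estimate of \cite{CDM} and the unweighted directional estimate of \cite{HMS14}. I would establish it through the weighted $\overline{\partial}_{\varphi}$-Neumann formalism, writing $P_{\mu_{0}}^{\Omega}=I-\overline{\partial}_{\mu_{0}}^{*}N_{\mu_{0}}\overline{\partial}$ and reducing to subelliptic Sobolev estimates for the weighted $\overline{\partial}$-Neumann operator $N_{\mu_{0}}$; the finite-type hypothesis yields such estimates, and the point inherited from \cite{CDM} is their stability under the weight factor $(-\rho_{0})^{r}$. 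The directional gain then comes from the commutator scheme of \cite{HMS14}: for a holomorphic output $u=P_{\mu_{0}}^{\Omega}f$ one has $\overline{\partial}u=0$, so the complex tangential derivatives are controlled by subellipticity and only the single transversal direction, carried by $T$, is missing; iterating recovers the full $L_{k}^{2}$ norm from the iterated $T$-derivatives of $f$.

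The main obstacle is propagating the directional control through these commutators while carrying the full weight $\omega=\widetilde{\eta}(-\rho_{0})^{r}$, and in particular establishing the $L_{k}^{2}$-boundedness of the last factor $T_{\widetilde{\eta}}^{-1}$. One checks that $T_{\widetilde{\eta}}^{-1}=P_{\widetilde{\eta}\mu_{0}}^{\Omega}M_{1/\widetilde{\eta}}$ on $A^{2}(\mu_{0})$, so this boundedness is equivalent to the $L_{k}^{2}$-regularity of $P_{\widetilde{\eta}\mu_{0}}^{\Omega}$ itself; it therefore cannot be obtained by soft functional analysis and must be produced by the same weighted commutator estimates. I expect this interlocking of the Toeplitz inversion with the subelliptic directional estimate to be the delicate part, to be resolved by proving the a priori directional estimate for $P_{\widetilde{\eta}\mu_{0}}^{\Omega}$ directly and deducing the Toeplitz bound a posteriori.
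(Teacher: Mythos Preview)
Your reduction to the weight $\widetilde{\eta}(-\rho_{0})^{r}$ is correct and matches the paper. The gap is in the passage from $P_{\mu_{0}}^{\Omega}$ to $P_{\widetilde{\eta}\mu_{0}}^{\Omega}$. You yourself observe that the Toeplitz identity $P_{\widetilde{\eta}\mu_{0}}^{\Omega}=T_{\widetilde{\eta}}^{-1}P_{\mu_{0}}^{\Omega}M_{\widetilde{\eta}}$ is circular: bounding $T_{\widetilde{\eta}}^{-1}$ on $L_{k}^{2}$ is exactly the $L_{k}^{2}$-regularity of $P_{\widetilde{\eta}\mu_{0}}^{\Omega}$ you are trying to prove. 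Your proposed resolution, to ``prove the a priori directional estimate for $P_{\widetilde{\eta}\mu_{0}}^{\Omega}$ directly'', is not a resolution---it is a restatement of the theorem. Nothing in the proposal supplies a mechanism that breaks the loop.

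The paper breaks the circularity with a different comparison formula (its Proposition~3.2),
\[
\eta\,P_{\omega}^{\Omega}(u)=P_{\omega_{0}}^{\Omega}(\eta u)+R\circ\bigl(\overline{\partial}^{*}\mathcal{N}_{\widetilde{\Omega}}\bigr)\circ I\bigl(P_{\omega}^{\Omega}(u)\,\overline{\partial}\eta\bigr),
\]
obtained by lifting to the unweighted inflated domain $\widetilde{\Omega}\subset\mathbb{C}^{n+m}$. The crucial feature absent from your scheme is that the error term carries a \emph{gain}: $\overline{\partial}^{*}\mathcal{N}_{\widetilde{\Omega}}$ maps $L_{s}^{2}(\widetilde{\Omega})$ into $L_{s+1/N}^{2}(\widetilde{\Omega})$ by finite-type subellipticity. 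This permits a bootstrap: from $P_{\omega}^{\Omega}(u)\in L_{s-1}^{2}$ one iterates $N$ times through the formula (using also $P_{\omega_{0}}^{\Omega}\circ P_{\omega}^{\Omega}=P_{\omega}^{\Omega}$ and the directional estimate for $P_{\omega_{0}}^{\Omega}$) to reach $L_{s}^{2}$. Your Toeplitz identity has no such gain, so it cannot bootstrap. Separately, your sketch for the directional estimate on $P_{\mu_{0}}^{\Omega}$ via ``$P_{\mu_{0}}^{\Omega}=I-\overline{\partial}_{\mu_{0}}^{*}N_{\mu_{0}}\overline{\partial}$'' is also problematic: the weight $(-\rho_{0})^{r}$ degenerates at $\partial\Omega$, so a weighted $\overline{\partial}$-Neumann operator $N_{\mu_{0}}$ with subelliptic estimates is not available off the shelf. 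The paper again sidesteps this by inflation: $P_{\omega_{0}}^{\Omega}=R\circ P^{\widetilde{\Omega}}\circ I$, and then the directional estimate of \cite{HMS14} is applied to the \emph{unweighted} $P^{\widetilde{\Omega}}$ after checking that $T$ lifts to a tangential complex-transversal field on $\partial\widetilde{\Omega}$.
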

Note that $r$ is allowed to be $0$.
\begin{cor*}
In the conditions stated in the theorem, $P_{\omega}^{\Omega}$ maps
continuously $$\cap_{k\in\mathbb{N}}L_{k,T}^{2}\left(\Omega,\omega_{0}d\lambda\right)$$
into $\mathcal{C}^{\infty}(\overline{\Omega})$.\end{cor*}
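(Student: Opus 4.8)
The plan is to read the corollary as a continuity statement between Fréchet spaces and to reduce it, via the Theorem, to a single \emph{weighted Sobolev embedding}. Equipping $\bigcap_{k}L_{k,T}^{2}\left(\Omega,\omega d\lambda\right)$ with the family of norms $\left\Vert \cdot\right\Vert _{L_{k,T}^{2}}$ and $\mathcal{C}^{\infty}(\overline{\Omega})$ with the norms $\left\Vert \cdot\right\Vert _{\mathcal{C}^{m}(\overline{\Omega})}$, continuity amounts to showing that for every integer $m$ there exist an integer $k=k(m)$ and a constant $C$ such that $\left\Vert P_{\omega}^{\Omega}u\right\Vert _{\mathcal{C}^{m}(\overline{\Omega})}\leq C\left\Vert u\right\Vert _{L_{k,T}^{2}\left(\Omega,\omega d\lambda\right)}$. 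First I would invoke the Theorem, which provides $\bigl\Vert P_{\omega}^{\Omega}u\bigr\Vert _{L_{k}^{2}\left(\Omega,\omega d\lambda\right)}\leq C_{k}\left\Vert u\right\Vert _{L_{k,T}^{2}\left(\Omega,\omega d\lambda\right)}$ for every $k$. Because $\eta$ is smooth and strictly positive on the compact set $\overline{\Omega}$, it is bounded above and below by positive constants, so $\omega$ is comparable to $\left(-\rho\right)^{r}$ and the norms of $L_{k}^{2}\left(\Omega,\omega d\lambda\right)$ and $L_{k}^{2}\left(\Omega,\left(-\rho\right)^{r}d\lambda\right)$ are equivalent; the same remark lets me replace the weight written $\omega_{0}$ in the statement by $\omega$. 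Hence everything reduces to the embedding of $L_{k}^{2}\left(\Omega,\left(-\rho\right)^{r}d\lambda\right)$ into $\mathcal{C}^{m}(\overline{\Omega})$.

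To establish that embedding I would work in a boundary chart in which $t=-\rho$ is a coordinate transverse to $\partial\Omega$, the tangential directions being absorbed by the classical (unweighted) Sobolev embedding, so that the whole issue is the normal variable $t\in(0,\delta)$ carrying the degenerate weight $t^{r}$. The essential point --- and the main obstacle --- is that each single space $L_{k}^{2}\left(\Omega,\left(-\rho\right)^{r}d\lambda\right)$ is strictly larger than the unweighted $L_{k}^{2}(\Omega)$ and contains functions that blow up at $\partial\Omega$, so no fixed $L^{2}$-bound controls the supremum; what saves the argument is that the weight costs only the fixed power $r$, which is overcome by differentiating enough times. Concretely, from the finiteness of $\int_{0}^{\delta}\bigl|\partial_{t}^{j}D^{\beta}f\bigr|^{2}t^{r}\,dt$ for consecutive $j$ one gets, by iterated Cauchy--Schwarz and Hardy inequalities in $t$, pointwise bounds on the lower-order derivatives $D^{\beta}f$ that are integrable up to $t=0$; integrating in $t$ from an interior level then shows that each $D^{\beta}f$ with $\left|\beta\right|\leq m$ extends continuously to $\partial\Omega$, together with the quantitative bound $\left\Vert f\right\Vert _{\mathcal{C}^{m}(\overline{\Omega})}\leq C\left\Vert f\right\Vert _{L_{k}^{2}\left(\Omega,\left(-\rho\right)^{r}d\lambda\right)}$ as soon as $k>m+n+r/2$. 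The delicate step is controlling the boundary terms in the Hardy inequalities when $r\geq1$, which forces one to pass through the higher derivatives rather than a single integration; here one may alternatively exploit that $f=P_{\omega}^{\Omega}u$ is holomorphic, so that only the pure derivatives $\partial^{\alpha}f$ occur and interior sub-mean-value estimates are available.

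Finally I would assemble the two pieces: given $m$, choose $k=k(m)$ as above; then $\left\Vert P_{\omega}^{\Omega}u\right\Vert _{\mathcal{C}^{m}(\overline{\Omega})}\lesssim\bigl\Vert P_{\omega}^{\Omega}u\bigr\Vert _{L_{k}^{2}\left(\Omega,\omega d\lambda\right)}\lesssim\left\Vert u\right\Vert _{L_{k,T}^{2}\left(\Omega,\omega d\lambda\right)}$, which is exactly the required seminorm estimate. Letting $m$ range over $\mathbb{N}$ yields the continuity of $P_{\omega}^{\Omega}$ from $\bigcap_{k}L_{k,T}^{2}\left(\Omega,\omega d\lambda\right)$ into $\mathcal{C}^{\infty}(\overline{\Omega})$, and in particular maps this intersection into $\mathcal{C}^{\infty}(\overline{\Omega})$. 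I expect the weighted embedding near the boundary to be the only real work, the reduction from $\omega$ to $\left(-\rho\right)^{r}$ and the final assembly being routine.
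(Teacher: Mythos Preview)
Your overall strategy is correct and matches the paper's: use the Theorem to place $P_{\omega}^{\Omega}(u)$ in every weighted space $L_{k}^{2}\left(\Omega,\omega_{0}d\lambda\right)$, then pass to $\mathcal{C}^{m}(\overline{\Omega})$ by a weighted Sobolev embedding. The difference is in how that embedding is obtained. The paper does not attempt the Hardy-inequality argument for general functions; it immediately exploits that $P_{\omega}^{\Omega}(u)$ is holomorphic (hence harmonic) and invokes Theorem~1.1 of Cho--Kwon \cite{CK03}: a \emph{harmonic} function in $L_{k}^{2}\bigl(\Omega,\delta_{\partial\Omega}^{2l_{r}}d\lambda\bigr)$ lies in the unweighted $L_{k-l_{r}}^{2}(\Omega)$, for any integer $l_{r}$ with $2l_{r}\geq r$. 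Since $\omega_{0}\lesssim\delta_{\partial\Omega}^{2l_{r}}$, this gives $P_{\omega}^{\Omega}(u)\in L_{k-l_{r}}^{2}(\Omega)$ for every $k$, and the classical Sobolev embedding finishes. The whole proof is three lines.

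Your direct route---iterated Cauchy--Schwarz and Hardy in the normal variable---can be made to work (the one-dimensional iteration you sketch does produce boundedness after roughly $\lceil r/2\rceil$ integrations, and tangential regularity then follows by the unweighted embedding), but the argument as written is incomplete exactly at the ``delicate step'' you flag, and carrying it out rigorously in a boundary chart is real work. Your own suggested alternative, using holomorphy and interior sub-mean-value estimates, is precisely the mechanism behind the Cho--Kwon result the paper cites; so the cleanest repair of your proof is simply to replace the Hardy paragraph by that citation.
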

Our second result is inspired by Theorem 1.10 of \cite{HMS14}:
\begin{stthm}
\label{thm:Cinfini-estimate-conj-holo}Let $\Omega$, $\eta$ and
$\omega$ as in \thmref{General_Sobolev_estimates}. Let $f\in L^{2}\left(\Omega,\omega d\lambda\right)$
such that $\overline{f}$ is holomorphic and let $h\in\mathcal{C}^{\infty}(\overline{\Omega})$.
Then $P_{\omega}^{\Omega}(fh)\in\mathcal{C}^{\infty}(\overline{\Omega})$.
\end{stthm}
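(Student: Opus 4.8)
Write $H:=P_{\omega}^{\Omega}(fh)$; since $f\in L^{2}(\Omega,\omega d\lambda)$ and $h$ is bounded, $fh\in L^{2}(\Omega,\omega d\lambda)$, so $H$ is a well defined holomorphic function and $P_{\omega}^{\Omega}H=H$. The plan is to deduce smoothness of $H$ from the corollary to \thmref{General_Sobolev_estimates}: it suffices to show that $H\in L_{k,T}^{2}(\Omega,\omega d\lambda)$ for every $k$, i.e.\ that $T^{l}H\in L^{2}(\Omega,\omega d\lambda)$ for all $l$, for then the corollary applied to the datum $H$ gives $H=P_{\omega}^{\Omega}H\in\mathcal{C}^{\infty}(\overline{\Omega})$. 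Note first the trivial case: if $\overline{f}\in\mathcal{C}^{\infty}(\overline{\Omega})$ then $fh\in\mathcal{C}^{\infty}(\overline{\Omega})\subset\bigcap_{k}L_{k,T}^{2}(\Omega,\omega d\lambda)$ and the corollary applies to $fh$ directly; thus the only real difficulty is that $f$ is merely square integrable, so that $fh$ itself need not belong to any $L_{k,T}^{2}(\Omega,\omega d\lambda)$.

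\textbf{Structure of the iteration.} Set $g=\overline{f}\in\mathcal{O}(\Omega)$. I would exploit two consequences of $T$ being tangential to $\partial\Omega$. First, integrations by parts against $T$ produce no boundary term. Second, since $T_{\rho}\rho\equiv0$ and $L\rho$ vanishes on $\partial\Omega$, one has $T(-\rho)=\rho\,q_{0}$ with $q_{0}\in\mathcal{C}^{\infty}(\overline{\Omega})$, whence $b:=T\log\omega=(T\eta)/\eta+r\,q_{0}\in\mathcal{C}^{\infty}(\overline{\Omega})$; in particular the commutator $[T,P_{\omega}^{\Omega}]$ is of order zero. The structural observation is that the class of finite sums $\sum_{j}\psi_{j}\,\overline{\partial^{\alpha_{j}}g}$, with $\psi_{j}\in\mathcal{C}^{\infty}(\overline{\Omega})$, is stable both under multiplication by $\mathcal{C}^{\infty}(\overline{\Omega})$ and under $T$ (because $T(\overline{\partial^{\alpha}g})$ is again a smooth-coefficient combination of the $\overline{\partial_{z_{i}}\partial^{\alpha}g}$). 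As $fh=h\,\overline{g}$ lies in this class, so does $T^{l}(fh)$ for every $l$. I would then argue by induction on $l$, commuting $T$ through $P_{\omega}^{\Omega}$ and using the smoothness of $b$, to reduce the estimate of $T^{l}H$ to estimates for $P_{\omega}^{\Omega}$ applied to the structural terms $\psi\,\overline{\partial^{\alpha}g}$ with $|\alpha|\le l$, together with lower order tangential derivatives of $H$.

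\textbf{Main obstacle.} The difficulty is precisely the control of these terms. Although each $\psi\,\overline{\partial^{\alpha}g}$ is anti-holomorphic up to smooth factors, it is a genuine derivative of $g$ and need not lie in $L^{2}(\Omega,\omega d\lambda)$, and the singular factor $(-\rho)^{r}$ of the weight forbids moving the $\partial^{\alpha}$ off $g$ by naive integration by parts. Consequently $T^{l}(fh)$ cannot be bounded in $L^{2}(\Omega,\omega d\lambda)$, and one must instead show that the projection itself absorbs these derivatives, i.e.\ prove an a priori estimate of the form $\left\Vert P_{\omega}^{\Omega}(\psi\,\overline{\partial^{\alpha}g})\right\Vert _{L^{2}(\Omega,\omega d\lambda)}\lesssim\left\Vert g\right\Vert _{L^{2}(\Omega,\omega d\lambda)}$ with a gain of tangential regularity. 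This is where the hypotheses enter in full: pseudoconvexity and finite type furnish the subelliptic energy estimate underlying \thmref{General_Sobolev_estimates}, in which the normal derivative is traded against tangential ones while the positivity of the Levi form controls the error terms, and the computation for the merely square integrable datum $f$ would be justified by elliptic regularization. Closing this induction, and thereby establishing $T^{l}H\in L^{2}(\Omega,\omega d\lambda)$ for all $l$, is the crux of the proof.
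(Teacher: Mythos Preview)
Your proposal has a genuine gap at precisely the point you label the ``main obstacle'': you never indicate a mechanism to establish $\left\Vert P_{\omega}^{\Omega}(\psi\,\overline{\partial^{\alpha}g})\right\Vert_{L^{2}(\Omega,\omega d\lambda)}\lesssim\left\Vert g\right\Vert_{L^{2}(\Omega,\omega d\lambda)}$, and your preliminary claim that $[T,P_{\omega}^{\Omega}]$ is ``of order zero'' is not justified by the computation $T\log\omega\in\mathcal{C}^{\infty}(\overline{\Omega})$. That computation only says that the \emph{formal adjoint} of $T$ with respect to $\omega\,d\lambda$ is $-T$ plus a zeroth-order term; it does not by itself control the commutator of $T$ with the projection, which is a much deeper regularity statement. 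In effect your plan amounts to reproving, directly in the weighted space, the Herbig--McNeal--Straube smoothing result for anti-holomorphic data, and you have not supplied the key estimate.

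The paper avoids this difficulty entirely by a different route. It lifts to the auxiliary unweighted finite-type domain $\widetilde{\Omega}\subset\mathbb{C}^{n+m}$ and invokes Theorem~1.10 of \cite{HMS14} for the standard Bergman projection there: since $I(\eta fh)$ is a smooth multiple of an anti-holomorphic $L^{2}$ function, $P^{\widetilde{\Omega}}(I(\eta fh))\in\mathcal{C}^{\infty}(\overline{\widetilde{\Omega}})$, hence $P_{\omega_{0}}^{\Omega}(\eta fh)=R\circ P^{\widetilde{\Omega}}\circ I(\eta fh)\in\mathcal{C}^{\infty}(\overline{\Omega})$. Then the comparison formula
\[
\eta\,P_{\omega}^{\Omega}(fh)=P_{\omega_{0}}^{\Omega}(\eta fh)+R\circ\bigl(\overline{\partial}^{*}\mathcal{N}_{\widetilde{\Omega}}\bigr)\circ I\bigl(P_{\omega}^{\Omega}(fh)\,\overline{\partial}\eta\bigr)
\]
is iterated exactly as in the proof of \thmref{Reformulation-General-Sobolev}: the subelliptic gain of $\overline{\partial}^{*}\mathcal{N}_{\widetilde{\Omega}}$ bootstraps $P_{\omega}^{\Omega}(fh)$ from $L^{2}$ into every $L_{k}^{2}(\Omega,\omega d\lambda)$, and the weighted-to-unweighted embedding for harmonic functions then gives $\mathcal{C}^{\infty}(\overline{\Omega})$. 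The anti-holomorphic hypothesis on $f$ is used only once, to feed \cite[Theorem~1.10]{HMS14} on $\widetilde{\Omega}$; no direct commutator estimate for $P_{\omega}^{\Omega}$ is needed.
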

The proofs are done in \secref{Proof-of-General-Sobolev-L2}.

\smallskip{}

Our other results are partial generalizations of Theorem 2.1 of \cite{CDM}
for domains in $\mathbb{C}^{2}$ and for convex domains. As the results
for convex domains are not general we state them separately and we
will only indicate the articulations of the proof at the end of \secref{Proof-of-theorem-C2-convex}.

The first result extends the results obtained by A. Bonami and S. Grellier in \cite{BG95}:
\begin{stthm}
\label{thm:Estimates-C2-rank-Levi}Let $\Omega$, $\eta$ and $\omega$
as in \thmref{General_Sobolev_estimates}. Assume moreover that, at
every point of $\partial\Omega$, the rank of the Levi form is $\geq n-2$.
Then:
\begin{enumerate}
\item For $1<p<+\infty$ and $k\in\mathbb{N}$, $P_{\omega}^{\Omega}$ maps
continuously the Sobolev space $L_{k}^{p}\left(\Omega,\omega d\lambda\right)$
into itself;
\item For $\alpha<1$, $P_{\omega}^{\Omega}$
maps continuously the Lipshitz space $\Lambda_{\alpha}\left(\Omega\right)$
into itself.
\end{enumerate}
\end{stthm}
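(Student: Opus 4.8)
The plan is to reduce the statement to the corresponding estimates for the \emph{special} weight $\omega_{0}=\left(-\rho_{0}\right)^{r}$ treated in \cite{CDM} and \cite{BG95}, exactly as in the proof of \thmref{General_Sobolev_estimates} in \secref{Proof-of-General-Sobolev-L2}, but carried out on the $L^{p}$-Sobolev and Lipschitz scales rather than on the $L^{2}$ one. Since $\rho$ and $\rho_{0}$ are two smooth defining functions, the quotient $(-\rho)/(-\rho_{0})$ is smooth and strictly positive on $\overline{\Omega}$, so $\omega=\eta\left(-\rho\right)^{r}=\Phi\,\omega_{0}$ with $\Phi=\eta\left((-\rho)/(-\rho_{0})\right)^{r}\in\mathcal{C}^{\infty}(\overline{\Omega})$ bounded between two positive constants. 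Write $\mathcal{H}$ for the common Bergman space and $X$ for either $L_{k}^{p}\left(\Omega,\omega d\lambda\right)$ or $\Lambda_{\alpha}(\Omega)$; note that $L^{2}(\Omega,\omega d\lambda)=L^{2}(\Omega,\omega_{0}d\lambda)$ with equivalent norms and the same holomorphic subspace, and that multiplication by $\Phi$ or $1/\Phi$ is bounded on $X$.

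The algebraic backbone is the identity $P_{\omega}^{\Omega}=A^{-1}P_{\omega_{0}}^{\Omega}M_{\Phi}$, where $M_{\Phi}$ denotes multiplication by $\Phi$ and $A=\left(P_{\omega_{0}}^{\Omega}M_{\Phi}\right)|_{\mathcal{H}}$ is the Gram operator comparing the two scalar products on $\mathcal{H}$; it follows at once from $\left\langle f,g\right\rangle _{\omega}=\left\langle \Phi f,g\right\rangle _{\omega_{0}}$ and the defining property of the projections. Granting that $P_{\omega_{0}}^{\Omega}$ is bounded on $X$ (this is Theorem 2.1 of \cite{CDM}, respectively \cite{BG95}, for the special weight, under the present hypotheses of finite type and Levi form of rank $\geq n-2$), the theorem reduces to the continuity of $A^{-1}$ on $\mathcal{H}\cap X$. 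For this I would use the Kohn formula $I-P_{\omega_{0}}^{\Omega}=\overline{\partial}_{\omega_{0}}^{*}N_{\omega_{0}}\overline{\partial}$ to write, for $h\in\mathcal{H}$,
\[
Ah=\Phi h-\left(I-P_{\omega_{0}}^{\Omega}\right)(\Phi h)=\Phi h-\overline{\partial}_{\omega_{0}}^{*}N_{\omega_{0}}\bigl((\overline{\partial}\Phi)h\bigr),
\]
the point being that $\overline{\partial}(\Phi h)=(\overline{\partial}\Phi)h$ since $h$ is holomorphic. Composing with $C=P_{\omega_{0}}^{\Omega}M_{1/\Phi}$ then gives $CA=I-K$ on $\mathcal{H}$, with $K=P_{\omega_{0}}^{\Omega}M_{1/\Phi}\overline{\partial}_{\omega_{0}}^{*}N_{\omega_{0}}M_{\overline{\partial}\Phi}$.

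It remains to invert $I-K$ on $\mathcal{H}\cap X$. Injectivity is free: if $CAh=0$ then $P_{\omega_{0}}^{\Omega}(Ah/\Phi)=0$, and pairing with $Ah$ in the $\omega_{0}$ product forces $Ah=0$, whence $h=0$ because $A$ is already invertible on $\mathcal{H}\cap L^{2}$. The essential content is that $K$ is \emph{compact} on $X$: the canonical solution operator $\overline{\partial}_{\omega_{0}}^{*}N_{\omega_{0}}$ applied to the $(0,1)$-form $(\overline{\partial}\Phi)h$ gains a positive amount of regularity on the finite type domain, so that $K$ maps $X$ into a space compactly embedded in $X$. With this in hand, $I-K$ is Fredholm of index zero and injective, hence invertible, and $A^{-1}=(I-K)^{-1}C$ is bounded on $\mathcal{H}\cap X$; combined with the boundedness of $M_{\Phi}$ and of $P_{\omega_{0}}^{\Omega}$ this yields the continuity of $P_{\omega}^{\Omega}$ on $X$, proving both (1) and (2).

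The main obstacle is precisely this last compactness and regularity gain, because it must be established in the $L^{p}$-Sobolev and Lipschitz norms rather than in $L^{2}$: one needs the subelliptic $L^{p}$ and Lipschitz estimates for the weighted $\overline{\partial}_{\omega_{0}}$-Neumann problem (equivalently, for the canonical solution operator) on the domains under consideration. This is exactly where the finite type and rank $\geq n-2$ assumptions enter, and where the estimates of \cite{BG95} in $\mathbb{C}^{2}$, together with their analogues for the special weight in \cite{CDM}, are used; the unavailability of such estimates in full generality is what confines the convex case to the partial statement announced in \secref{Proof-of-theorem-C2-convex}.
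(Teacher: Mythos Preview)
Your comparison identity is essentially the paper's \propref{Relation-P-omega_P-omega-0}: what you write as $\overline{\partial}_{\omega_{0}}^{*}N_{\omega_{0}}$ is realised there concretely as $R\circ(\overline{\partial}^{*}\mathcal{N}_{\widetilde{\Omega}})\circ I$ via the inflation $\Omega\rightsquigarrow\widetilde{\Omega}\subset\mathbb{C}^{n+m}$, so the algebraic set-up agrees. The proofs then diverge. The paper does \emph{not} invert $I-K$ by Fredholm theory; it runs a direct bootstrap on the comparison formula. For plain $L^{p}$ with $p>2$ one starts from $P_{\omega}^{\Omega}(u)\in L^{2}$ and uses the $L^{p}$ gain of $\overline{\partial}^{*}\mathcal{N}_{\widetilde{\Omega}}$ on the finite-type domain $\widetilde{\Omega}$ (Fefferman--Kohn--Machedon and Koenig, \thmref{d-bar-Levi-diag}) to raise the exponent by a fixed amount at each step, then gets $p<2$ by self-adjointness; the Lipschitz and $L_{k}^{p}$ statements are obtained by the same finite iteration. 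No compactness enters, and the gain is taken for the \emph{unweighted} Neumann problem on $\widetilde{\Omega}$---the rank $\geq n-2$ hypothesis is used precisely to make the Levi form of $\widetilde{\Omega}$ locally diagonalisable so that \thmref{d-bar-Levi-diag} applies.

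Your route has a real gap exactly where you flag ``the main obstacle''. You need an $L^{p}$-Sobolev and Lipschitz \emph{gain} for the weighted canonical solution operator on $\Omega$ itself, and neither \cite{BG95} (restricted to $\mathbb{C}^{2}$) nor \cite{CDM} (Bergman-projection regularity only, no $\overline{\partial}$-gain statement) supplies this in the generality required. The paper's device for manufacturing that gain is the lifting to $\widetilde{\Omega}$ together with the transfer Lemmas \ref{lem:I_and_R_global}--\ref{lem:Lemma_I_R}; once you import that machinery your $K$ does gain regularity and the Fredholm argument can be completed, but at that point the paper's bootstrap is shorter and avoids a second technicality you have not addressed: the compactness of the embedding $L_{k+\varepsilon}^{p}(\Omega,\omega_{0}d\lambda)\hookrightarrow L_{k}^{p}(\Omega,\omega_{0}d\lambda)$ (or its restriction to $\mathcal{H}$), which is not the standard Rellich theorem since $\omega_{0}=(-\rho_{0})^{r}$ vanishes on $\partial\Omega$.
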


The results for convex domains are identical but under an additional
condition on the existence of a special defining function. To state
it we recall a terminology introduced in Section 2.4 of \cite{CDM}:

If $g$ is a real or complex valued smooth function defined in a neighborhood
of the origin in $\mathbb{R}^{d}$, we call the \emph{order} of $g$
at the origin the integer $\mbox{ord}_{0}(g)$ defined by $\mbox{ord}_{0}(g)=\infty$
if $g^{(\alpha)}(0)=0$ for all multi-index $\alpha\in\mathbb{N}^{d}$
and  
\begin{multline*}
\mbox{ord}_{0}(g)=\min\Big\{ k\in\mathbb{N}\mbox{ such that there exists }\alpha\in\mathbb{N}^{d},\,\\
\left.\left|\alpha\right|=\sum\alpha_{i}=k\mbox{ such that }g^{(\alpha)}(0)\neq0\right\} 
\end{multline*}
otherwise. If $\psi$ is a smooth function defined in a neighborhood
of the origin in $\mathbb{C}^{m}$, then, for all function $\varphi$
from the unit disc of the complex plane into $\mathbb{C}^{m}$ such
that $\varphi(0)=0$, $\psi\circ\varphi$ is smooth in a neighborhood
of the origin in $\mathbb{C}$. Then we call the \emph{type} of $\psi$
at the origin the supremum of $\frac{\mbox{ord}_{0}\left(\psi\circ\varphi\right)}{\mbox{ord}_{0}\left(\varphi\right)}$,
taken over all non zero holomorphic function $\varphi$ from the unit
disc of the complex plane into $\mathbb{C}^{m}$ such that $\varphi(0)=0$.
If this supremum is finite, we say that $\psi$ is of \emph{finite
type} at the origin and we denote this supremum by $\mbox{typ}_{0}(\psi)$.
Moreover, if $\vartheta$ is a smooth function defined in a neighborhood
of a point $z_{0}\in\mathbb{C}^{m}$, the type $\mbox{typ}_{z_{0}}\left(\vartheta\right)$
of $\vartheta$ at $z_{0}$ is $\mbox{typ}_{0}\left(\vartheta_{k}\right)$
where $\vartheta_{k}(z)=\vartheta\left(z_{0}+z\right)$ and we say
that $\vartheta$ is of finite type at $z_{0}$ if $\mbox{typ}_{z_{0}}\left(\vartheta\right)<+\infty$.
If $\vartheta$ is defined on a neighborhood of a set $S$ we say
that $\vartheta$ is of finite type on $S$ if $\sup_{z\in S}\mbox{typ}_{z}\left(\vartheta\right)<+\infty$.
\begin{spprop}
\label{prop:Estimates_for_Convex_domains}Let $\Omega$, $\eta$ and
$\omega$ as in \thmref{General_Sobolev_estimates}. Assume that $\Omega$
is convex and admits a defining function which is smooth, convex and
of finite type in $\overline{\Omega}$. Then:
\begin{enumerate}
\item For $1<p<+\infty$ and $k\in\mathbb{N}$, $P_{\omega}^{\Omega}$ maps
continuously the Sobolev space $L_{k}^{p}\left(\Omega,\omega d\lambda\right)$
into itself;
\item For $\alpha<1$, $P_{\omega}^{\Omega}$
maps continuously the Lipshitz space $\Lambda_{\alpha}\left(\Omega\right)$
into itself.
\end{enumerate}
\end{spprop}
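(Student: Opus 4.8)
The plan is to reduce the two estimates to the corresponding estimates for a special weight already treated in \cite{CDM}, and then to transfer them through a Toeplitz-type identity, in the same spirit as the treatment of the full $L^{2}$-Sobolev scale in \thmref{General_Sobolev_estimates}. By hypothesis $\Omega$ carries a smooth defining function $\rho_{0}$ that is convex and of finite type on $\overline{\Omega}$; set $\omega_{0}=\left(-\rho_{0}\right)^{r}$. Since the quotient of any two smooth defining functions is smooth and bounded away from $0$ and $\infty$ on $\overline{\Omega}$, the function $a=\left(-\rho\right)/\left(-\rho_{0}\right)$ enjoys this property, and because $r$ is a fixed rational number and $a$ is bounded below, $a^{r}\in\mathcal{C}^{\infty}(\overline{\Omega})$ is strictly positive. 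Writing $\eta_{1}=\eta\,a^{r}$ we get $\omega=\eta_{1}\omega_{0}$ with $\eta_{1}\in\mathcal{C}^{\infty}(\overline{\Omega})$ strictly positive. As $\eta_{1}$ is bounded above and below, the norms $\left\Vert D^{\alpha}u\right\Vert_{L^{p}\left(\Omega,\omega d\lambda\right)}$ and $\left\Vert D^{\alpha}u\right\Vert_{L^{p}\left(\Omega,\omega_{0}d\lambda\right)}$ are comparable, so $L_{k}^{p}\left(\Omega,\omega d\lambda\right)=L_{k}^{p}\left(\Omega,\omega_{0}d\lambda\right)$ with equivalent norms, and it suffices to argue with $\omega_{0}$. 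The convexity and finite type of $\rho_{0}$ are exactly what make Theorem 2.1 of \cite{CDM} applicable, which is why the convex case is isolated in the present proposition.

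Next I would record the identity linking the two projections. Both $P_{\omega}^{\Omega}$ and $P_{\omega_{0}}^{\Omega}$ are the orthogonal projections onto the same space $\mathcal{H}$ of holomorphic $L^{2}$ functions (the two norms being equivalent). Rewriting the defining orthogonality $\langle f-P_{\omega}^{\Omega}f,g\rangle_{\omega}=0$ ($g\in\mathcal{H}$) with respect to the measure $\omega_{0}d\lambda$ gives $P_{\omega_{0}}^{\Omega}\!\left(\eta_{1}\left(f-P_{\omega}^{\Omega}f\right)\right)=0$, that is
\[
P_{\omega_{0}}^{\Omega}\left(\eta_{1}P_{\omega}^{\Omega}f\right)=P_{\omega_{0}}^{\Omega}\left(\eta_{1}f\right).
\]
Introducing the Toeplitz operator $Th=P_{\omega_{0}}^{\Omega}\left(\eta_{1}h\right)$ on $\mathcal{H}$ and writing $M_{\eta_{1}}$ for multiplication by $\eta_{1}$, this reads $T\,P_{\omega}^{\Omega}f=P_{\omega_{0}}^{\Omega}M_{\eta_{1}}f$, hence
\[
P_{\omega}^{\Omega}=T^{-1}P_{\omega_{0}}^{\Omega}M_{\eta_{1}}.
\]
That $T$ is invertible on $\mathcal{H}$ is clear at the $L^{2}$ level: it is self-adjoint for $\langle\cdot,\cdot\rangle_{\omega_{0}}$ and $\langle Th,h\rangle_{\omega_{0}}=\int_{\Omega}\eta_{1}|h|^{2}\omega_{0}\,d\lambda\geq\big(\min_{\overline{\Omega}}\eta_{1}\big)\left\Vert h\right\Vert_{\omega_{0}}^{2}$.

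Granting the factorization, statement (1) follows by composing three maps: $M_{\eta_{1}}$ is bounded on $L_{k}^{p}\left(\Omega,\omega_{0}d\lambda\right)$ by Leibniz's rule, since $\eta_{1}\in\mathcal{C}^{\infty}(\overline{\Omega})$; $P_{\omega_{0}}^{\Omega}$ is bounded on $L_{k}^{p}\left(\Omega,\omega_{0}d\lambda\right)$ by Theorem 2.1 of \cite{CDM}; and it remains to control $T^{-1}$ on $L_{k}^{p}\left(\Omega,\omega_{0}d\lambda\right)\cap\mathcal{H}$. Statement (2) is entirely parallel, with the unweighted $\Lambda_{\alpha}(\Omega)$ replacing the Sobolev spaces, $M_{\eta_{1}}$ still bounded (multiplication by a smooth function), and the Lipschitz part of Theorem 2.1 of \cite{CDM} supplying the boundedness of $P_{\omega_{0}}^{\Omega}$.

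The main obstacle is precisely the boundedness of $T^{-1}$ on the $L^{p}$-Sobolev and Lipschitz scales: the $L^{2}$ lower bound alone does not yield a bounded inverse there, and a Neumann series in the oscillation of $\eta_{1}$ converges only on $L^{2}$. I would proceed as for \thmref{General_Sobolev_estimates}: on $\mathcal{H}$ one has $T=M_{\eta_{1}}+\left[P_{\omega_{0}}^{\Omega},M_{\eta_{1}}\right]$, so $T=M_{\eta_{1}}\left(I+R\right)$ with $R=M_{\eta_{1}}^{-1}\left[P_{\omega_{0}}^{\Omega},M_{\eta_{1}}\right]$. The heart of the matter is to show, from the precise weighted Bergman-kernel estimates for convex finite-type domains underlying Theorem 2.1 of \cite{CDM}, that the commutator (and hence $R$) gains regularity in the complex tangential and normal directions, so that $R$ is compact on $L_{k}^{p}\left(\Omega,\omega_{0}d\lambda\right)\cap\mathcal{H}$ and on $\Lambda_{\alpha}(\Omega)\cap\mathcal{H}$. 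Then $I+R$ is Fredholm of index zero on each scale, and its injectivity there follows from the already-established injectivity of $T$ on $L^{2}\cap\mathcal{H}$; consequently $T$ is invertible on the scale and $T^{-1}$ is bounded there. Assembling the three factors yields the continuity of $P_{\omega}^{\Omega}$ on both scales. The remaining points (the equivalence of the weighted norms, the Leibniz estimate for $M_{\eta_{1}}$, and the convex version of Theorem 2.1 of \cite{CDM}) are routine or quoted, and I expect the transfer of the commutator's regularizing property to the $L^{p}$ and Lipschitz settings to be the one genuinely delicate step.
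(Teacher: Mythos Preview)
Your factorization $P_{\omega}^{\Omega}=T^{-1}P_{\omega_{0}}^{\Omega}M_{\eta_{1}}$ via the Toeplitz operator $T$ is correct, but the argument you give for the boundedness of $T^{-1}$ on the $L_{k}^{p}$ and $\Lambda_{\alpha}$ scales is not a proof: it is a program whose central step (that $R=M_{\eta_{1}}^{-1}[P_{\omega_{0}}^{\Omega},M_{\eta_{1}}]$ gains regularity, hence is compact on each scale) is only asserted, and you yourself flag it as ``the one genuinely delicate step''. Establishing such a gain for the weighted Bergman commutator on convex finite-type domains requires precise weighted kernel estimates that are not available off the shelf in \cite{CDM}; without them the Fredholm argument does not get started. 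So as written there is a genuine gap.

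The paper follows a different route, which avoids inverting $T$ altogether. It lifts to the inflated domain $\widetilde{\Omega}=\{(z,w):\rho_{0}(z)+h(w)<0\}\subset\mathbb{C}^{n+m}$ and uses the explicit comparison formula
\[
\eta_{1}P_{\omega}^{\Omega}(u)=P_{\omega_{0}}^{\Omega}(\eta_{1}u)+R\circ\bigl(\overline{\partial}^{*}\mathcal{N}_{\widetilde{\Omega}}\bigr)\circ I\bigl(P_{\omega}^{\Omega}(u)\,\overline{\partial}\eta_{1}\bigr)
\]
(Proposition \ref{prop:Relation-P-omega_P-omega-0}). Here the gain is produced not by a commutator but by the canonical solution operator $\overline{\partial}^{*}\mathcal{N}_{\widetilde{\Omega}}$, for which Cumenge's sharp $L^{p}\to L^{s}$ and $L^{p}\to\Lambda_{\alpha}$ estimates on convex finite-type domains are quoted directly, together with a companion $\Lambda_{\alpha}\to\Lambda_{\alpha+1/\tau}$ and $L_{s}^{p}\to L_{s+1/\tau}^{p}$ gain. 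One then bootstraps: starting from $P_{\omega}^{\Omega}(u)\in L^{2}$, each application of the formula upgrades the regularity of $P_{\omega}^{\Omega}(u)$ by a fixed amount, until the target $L^{p}$, $\Lambda_{\alpha}$ or $L_{k}^{p}$ space is reached (exactly as in the proof of \thmref{reformulation-C-2}). The extra hypothesis that $\rho_{0}$ be convex and of finite type on $\overline{\Omega}$ is needed precisely so that $\widetilde{\Omega}$ is convex of finite type and Cumenge's theorems apply.

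In short: your Toeplitz/Fredholm scheme is different in kind from the paper's $\overline{\partial}$-bootstrap, and while it is not unreasonable, its key compactness input is left unproved. If you wanted to complete it you would in effect have to prove a smoothing estimate for $[P_{\omega_{0}}^{\Omega},M_{\eta_{1}}]$ on the relevant scales, which is comparable in difficulty to (and not obviously easier than) the $\overline{\partial}^{*}\mathcal{N}_{\widetilde{\Omega}}$ estimates the paper invokes.
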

\begin{rem*}
The defining function choosen in \cite{CDM} for a general convex
domain of finite type is smooth convex of finite type everywhere except
at one point.
\end{rem*}

\thmref{Estimates-C2-rank-Levi} is proved in \secref{Proof-of-theorem-C2-convex}
as a special case of a stronger directional $L_{k}^{p}$ estimate
(\thmref{reformulation-C-2}).

\smallskip{}

The general scheme of the proofs of these results is as follows. Recall
that in \cite{CDM} we obtain the estimates in the above theorems
for the projections $P_{\omega_{0}}^{\Omega}$ where $\omega_{0}=\left(-\rho_{0}\right)^{r}$,
$\rho_{0}$ being the following special defining function of $\Omega$:
\begin{itemize}
\item For \thmref{General_Sobolev_estimates} and for \thmref{Estimates-C2-rank-Levi},
using a celebrated theorem of K. Diederich \& J. E. Forn\ae{}ss (\cite[Theorem 1]{DF77-Strict-Psh-Exhau-Func-Inventiones}),
$\rho_{0}$ is chosen so that there exists $t\in\left]0,1\right[$
such that $-\left(-\rho_{0}\right)^{t}$ is strictly plurisubharmonic
in $\Omega$.
\item If $\Omega$ is convex $\rho_{0}$ is assumed to be convex and of
finite type in $\overline{\Omega}$ (hypothesis of \propref{Estimates_for_Convex_domains}).
\end{itemize}

Then we obtain the results for $P_{\omega}^{\Omega}$ comparing
$P_{\omega}^{\Omega}$ and $P_{\omega_{0}}^{\Omega}$ as explained
in the next section. The restriction imposed to $\Omega$ in \propref{Estimates_for_Convex_domains}
comes from the fact that this comparison uses estimates with gain for
solutions of the $\overline{\partial}$-equation in a domain $\widetilde{\Omega}$
in $\mathbb{C}^{n+m}$, $n+m\geq3$, which are known only under strong
hypothesis on $\widetilde{\Omega}$.
\medskip{}
As $\rho$ and $\rho_{0}$ are two smooth defining functions of $\Omega$
there exists a function $\varphi$ smooth and strictly positive on
$\overline{\Omega}$ such that $\rho=\varphi\rho_{0}$. Then, there
exists a function $\eta_{1}\in\mathcal{C}^{\infty}(\overline{\Omega})$,
$\eta_{1}>0$, such that $\omega=\eta_{1}\left(-\rho_{0}\right)^{r}$.
\medskip{}

\emph{Thus, from now on, $\rho_{0}$ and $\omega_{0}$ are fixed as
above and, to simplify the notations, we write $\omega=\eta\left(-\rho_{0}\right)^{r}$
where $\eta$ is a strictly positive function in $\mathcal{C}^{1}(\overline{\Omega})$.}

\section{Comparing \texorpdfstring{$P_{\omega}^{\Omega}$}{P\textomega}
and \texorpdfstring{$P_{\omega_{0}}^{\Omega}$}{P\textomega\textzeroinferior}}

This comparison is based on the following simple formula:
\begin{spprop}
With the previous notations for $D$ and $P_{\nu}^{D}$, let $\eta$
be a strictly positive function in $\mathcal{C}^{\infty}\left(\overline{D}\right)$
(so that $\eta\nu$ is an admissible weight). Let $L_{\left(0,1\right)}^{2}\left(D,\nu d\lambda\right)$
be the space of $\left(0,1\right)$-forms with coefficients in $L^{2}\left(D,\nu d\lambda\right)$.
If there exists a continuous linear operator $A_{\nu}$ from $L_{\left(0,1\right)}^{2}\left(D,\nu d\lambda\right)\cap\ker\overline{\partial}$
into $L^{2}\left(D,\nu d\lambda\right)$ such that, for $f\in L_{\left(0,1\right)}^{2}\left(D,\nu d\lambda\right)\cap\ker\overline{\partial}$,
$A_{\nu}(f)$ is orthogonal to holomorphic functions in $L^{2}\left(D,\nu d\lambda\right)$
and $\overline{\partial}A_{\nu}(f)=f$ then, for all $u\in L^{2}\left(D,\nu d\lambda\right)$
we have
\[
\eta P_{\eta\nu}^{D}(u)=P_{\nu}^{D}\left(\eta u\right)+A_{\nu}\left(P_{\eta\nu}^{D}(u)\overline{\partial}\eta\right).
\]
\end{spprop}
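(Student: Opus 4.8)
The plan is to verify the identity directly, by exhibiting the right-hand side as the canonical holomorphic-plus-correction decomposition of $\eta P_{\eta\nu}^{D}(u)$ and then appealing to the orthogonality characterizations of the two Bergman projections together with the defining properties of $A_{\nu}$. Write $g=P_{\eta\nu}^{D}(u)$; this is holomorphic, and since $\eta$ is continuous and strictly positive on the compact set $\overline{D}$ we have $0<c\leq\eta\leq C<\infty$, so $c\nu\leq\eta\nu\leq C\nu$. Consequently $L^{2}\left(D,\nu d\lambda\right)$ and $L^{2}\left(D,\eta\nu d\lambda\right)$ coincide as sets with equivalent norms; in particular $g$, $\eta u$, and every holomorphic $L^{2}$ function lie in both spaces at once, so all the objects below are well defined.

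First I would compute the $\overline{\partial}$ of $\eta g$. As $g$ is holomorphic, $\overline{\partial}(\eta g)=g\,\overline{\partial}\eta=P_{\eta\nu}^{D}(u)\,\overline{\partial}\eta$. I would then check that this $\left(0,1\right)$-form lies in the domain of $A_{\nu}$: it is $\overline{\partial}$-closed, because $\overline{\partial}\left(g\,\overline{\partial}\eta\right)=\overline{\partial}^{2}(\eta g)=0$, and it is square integrable for $\nu d\lambda$, since $g\,\overline{\partial}\eta\in L_{\left(0,1\right)}^{2}\left(D,\eta\nu d\lambda\right)$ and the two weights are comparable. Hence $A_{\nu}\left(g\,\overline{\partial}\eta\right)$ is defined, satisfies $\overline{\partial}A_{\nu}\left(g\,\overline{\partial}\eta\right)=g\,\overline{\partial}\eta$, and is orthogonal to holomorphic functions in $L^{2}\left(D,\nu d\lambda\right)$.

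Next I would set $w=\eta g-A_{\nu}\left(g\,\overline{\partial}\eta\right)$ and observe that $\overline{\partial}w=\overline{\partial}(\eta g)-g\,\overline{\partial}\eta=0$, so $w$ is a holomorphic element of $L^{2}\left(D,\nu d\lambda\right)$. It then remains to identify $w$ with $P_{\nu}^{D}(\eta u)$, and for this I would test against an arbitrary holomorphic $h\in L^{2}\left(D,\nu d\lambda\right)$. Using that $A_{\nu}\left(g\,\overline{\partial}\eta\right)$ is orthogonal to $h$ in $L^{2}\left(D,\nu d\lambda\right)$ and the elementary weight-swap $\int_{D}(\eta v)\overline{h}\,\nu d\lambda=\int_{D}v\overline{h}\,\eta\nu d\lambda$, one obtains
\[
\int_{D}(\eta u-w)\overline{h}\,\nu d\lambda=\int_{D}(u-g)\overline{h}\,\eta\nu d\lambda.
\]
Since $g=P_{\eta\nu}^{D}(u)$, the right-hand side vanishes because $u-g$ is orthogonal to the holomorphic function $h$ in $L^{2}\left(D,\eta\nu d\lambda\right)$. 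Thus $\eta u-w$ is orthogonal to every holomorphic function of $L^{2}\left(D,\nu d\lambda\right)$ while $w$ is itself holomorphic, which forces $w=P_{\nu}^{D}(\eta u)$. Substituting back gives $\eta P_{\eta\nu}^{D}(u)=\eta g=P_{\nu}^{D}(\eta u)+A_{\nu}\left(P_{\eta\nu}^{D}(u)\,\overline{\partial}\eta\right)$, as claimed.

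I do not expect any serious obstacle here; the argument is essentially bookkeeping. The only points requiring care are the integrability verifications---keeping track of which weight each object is square integrable against, and invoking the comparability $c\nu\leq\eta\nu\leq C\nu$ to pass between the two weighted $L^{2}$ spaces---and making sure the hypotheses on $A_{\nu}$ ($\overline{\partial}$-inverting and range orthogonal to holomorphic functions) are applied to the correct datum $g\,\overline{\partial}\eta$.
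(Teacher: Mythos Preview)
Your proposal is correct and follows essentially the same approach as the paper: both arguments rest on the two observations that the difference of the two sides is $\overline{\partial}$-closed (hence holomorphic) and that it is orthogonal, in $L^{2}(D,\nu d\lambda)$, to all holomorphic functions (using the weight-swap $\langle \eta v,h\rangle_{\nu}=\langle v,h\rangle_{\eta\nu}$ and the orthogonality of $A_{\nu}(\cdot)$). The paper simply states these two facts in one line, whereas you spell out the integrability bookkeeping and package the argument via $w=\eta g-A_{\nu}(g\,\overline{\partial}\eta)$, but the content is the same.
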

\begin{proof}
This is almost immediate: from the second hypothesis on $A_{\nu}$
both sides of the formula have same $\overline{\partial}$, and, from
the first hypothesis, both sides have same scalar product, in $L^{2}\left(D,\nu d\lambda\right)$,
against holomorphic functions.
\end{proof}

We use this formula in the context developed in \cite{CDM}.

For $h(w)=\left|w\right|^{2q}$, $w\in\mathbb{C}^{m}$, $r=\nicefrac{q}{m}$
or $h(w)=\sum\left|w_{i}\right|^{2q_{i}}$, $w_{i}\in\mathbb{C}$,
$r=\sum\nicefrac{1}{q_{i}}$ (c.f. \cite{CDM}), $\rho_{0}$
and $\omega_{0}$ as introduced in the preceding section, we consider the domain in
$\mathbb{C}^{n+m}$ defined by
\[
\widetilde{\Omega}=\left\{ \left(z,w\right)\in\mathbb{C}^{n}\times\mathbb{C}^{m}\mbox{, s. t. }r(z,w)=\rho_{0}(z)+h(w)<0\right\} .
\]
Then (c.f. \cite{CDM}) $\widetilde{\Omega}$ is smooth, bounded and
pseudoconvex. Therefore the $\overline{\partial}$-Neumann operator
$\mathcal{N}_{\widetilde{\Omega}}$ is well defined. Let us introduce
two notations:
\begin{itemize}
\item If $u\in L^{p}\left(\Omega,\omega_{0}d\lambda\right)$, $1\leq p<+\infty$,
we denote by $I(u)$ the function, belonging to $L^{p}(\widetilde{\Omega})$,
defined by $I(u)\left(z,w\right)=u(z)$ (the fact that $I(u)\in L^{p}(\widetilde{\Omega})$
follows Fubini's theorem). We extend this notations to forms $f=\sum f_{i}d\overline{z}_{i}$
in $L_{(0,1)}^{p}\left(\Omega,\omega_{0}d\lambda\right)$ by $I(f)=\sum I\left(f_{i}\right)d\overline{z}_{i}$
(so that $I(f)\in L_{(0,1)}^{p}(\widetilde{\Omega})$ and, $I(f)$
is $\overline{\partial}$-closed if $f$ is so).
\item If $v\in L^{p}(\widetilde{\Omega})$, $1\leq p<+\infty$, is holomorphic
in $w$ we denote by $R(v)$ the function, belonging to $L^{p}\left(\Omega,\omega_{0}d\lambda\right)$
(by the mean value property applied to the subharmonic function $w\mapsto\left|v(z,w)\right|^{p}$),
defined by $R(v)(z)=v\left(z,0\right)$.
\end{itemize}
Then:
\begin{spprop}
\label{prop:Relation-P-omega_P-omega-0}For any function $u\in L^{2}\left(\Omega,\omega d\lambda\right)$,
we have
\begin{equation}
\eta P_{\omega}^{\Omega}(u)=P_{\omega_{0}}^{\Omega}\left(\eta u\right)+R\circ\left(\overline{\partial}^{*}\mathcal{N}_{\widetilde{\Omega}}\right)\circ I\left(P_{\omega}^{\Omega}(u)\overline{\partial}\left(\eta\right)\right).\label{eq:comparison-P-omega-P-omega0}
\end{equation}
\end{spprop}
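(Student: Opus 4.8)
The plan is to deduce the formula from the elementary identity established in the first Proposition of this section, applied with $D=\Omega$, $\nu=\omega_{0}$ (so that $\eta\nu=\omega$), and with the candidate solution operator
\[
A_{\omega_{0}}=R\circ\left(\overline{\partial}^{*}\mathcal{N}_{\widetilde{\Omega}}\right)\circ I.
\]
Everything reduces to checking that $A_{\omega_{0}}$ is a continuous operator from $L_{(0,1)}^{2}\left(\Omega,\omega_{0}d\lambda\right)\cap\ker\overline{\partial}$ into $L^{2}\left(\Omega,\omega_{0}d\lambda\right)$ satisfying the two hypotheses of that Proposition, namely $\overline{\partial}A_{\omega_{0}}(f)=f$ and $A_{\omega_{0}}(f)$ orthogonal to holomorphic functions in $L^{2}\left(\Omega,\omega_{0}d\lambda\right)$. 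Granting this, one simply substitutes $f=P_{\omega}^{\Omega}(u)\overline{\partial}\eta$: this form is $\overline{\partial}$-closed because $P_{\omega}^{\Omega}(u)$ is holomorphic and $\overline{\partial}^{2}=0$, and it lies in $L_{(0,1)}^{2}\left(\Omega,\omega_{0}d\lambda\right)$ since $\overline{\partial}\eta$ is bounded and $\omega$, $\omega_{0}$ induce equivalent $L^{2}$-norms; the identity of the first Proposition is then exactly the announced formula.

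First I would check that $A_{\omega_{0}}$ is well defined. For $f\in L_{(0,1)}^{2}\left(\Omega,\omega_{0}d\lambda\right)\cap\ker\overline{\partial}$, the form $I(f)$ is a $\overline{\partial}$-closed $(0,1)$-form on $\widetilde{\Omega}$ involving only the differentials $d\overline{z}_{i}$, so $v:=\overline{\partial}^{*}\mathcal{N}_{\widetilde{\Omega}}I(f)$ is the Kohn canonical solution of $\overline{\partial}v=I(f)$. Comparing components, $\partial v/\partial\overline{w}_{j}=0$ for every $j$, i.e. $v$ is holomorphic in $w$; hence $R(v)(z)=v(z,0)$ is meaningful. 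Continuity then follows by composing three bounded maps: $I$ is, up to a constant, an isometry into $L_{(0,1)}^{2}(\widetilde{\Omega})$; $\overline{\partial}^{*}\mathcal{N}_{\widetilde{\Omega}}$ is bounded on $L^{2}$ because $\widetilde{\Omega}$ is bounded pseudoconvex (Hörmander's estimate); and $R$ is bounded by the sub-mean-value inequality for the subharmonic functions $w\mapsto\left|v(z,w)\right|^{2}$. For the $\overline{\partial}$-condition I would use that, $v$ being holomorphic in $w$, restriction to $w=0$ commutes with $\partial/\partial\overline{z}_{i}$, so $\partial R(v)/\partial\overline{z}_{i}(z)=(\partial v/\partial\overline{z}_{i})(z,0)=I(f_{i})(z,0)=f_{i}(z)$, giving $\overline{\partial}R(v)=f$.

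The heart of the matter, and the step I expect to be the real obstacle, is the orthogonality condition. Let $g$ be holomorphic in $L^{2}\left(\Omega,\omega_{0}d\lambda\right)$; then $I(g)(z,w)=g(z)$ is holomorphic on $\widetilde{\Omega}$ and belongs to $L^{2}(\widetilde{\Omega})$, so the canonical-solution property gives $\langle v,I(g)\rangle_{L^{2}(\widetilde{\Omega})}=0$. The task is to convert this into weighted orthogonality on $\Omega$. Writing the integral over $\widetilde{\Omega}$ as an iterated integral over the fibers $\{w:h(w)<-\rho_{0}(z)\}$, I would exploit that these fibers are Reinhardt domains centered at the origin while $v(z,\cdot)$ is holomorphic on them: expanding in a power series and integrating the angular variables annihilates every monomial except the constant term, so the fiber average of $v(z,\cdot)$ equals $v(z,0)=R(v)(z)$. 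Since the fiber volume is $c\,(-\rho_{0}(z))^{r}$ (the identity that makes $I$ an isometry), this yields
\[
\langle v,I(g)\rangle_{L^{2}(\widetilde{\Omega})}=c\int_{\Omega}R(v)(z)\,\overline{g(z)}\,(-\rho_{0}(z))^{r}\,d\lambda(z)=c\,\langle R(v),g\rangle_{L^{2}\left(\Omega,\omega_{0}d\lambda\right)},
\]
so that $A_{\omega_{0}}(f)=R(v)$ is orthogonal to holomorphic functions in $L^{2}\left(\Omega,\omega_{0}d\lambda\right)$.

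The one delicate technical point in this last step is justifying the term-by-term integration (equivalently the mean value property over the Reinhardt fibers) for a $v$ that is merely holomorphic and $L^{2}$ on the fiber rather than holomorphic up to the boundary. I would handle this by an interior approximation, applying the mean value identity on the shrunken Reinhardt domains $\{h(w)<(1-\varepsilon)(-\rho_{0}(z))\}$ and letting $\varepsilon\to0$, with dominated convergence supplied by the global $L^{2}(\widetilde{\Omega})$ bound on $v$. Once $A_{\omega_{0}}$ is validated in this way, the identity of the first Proposition applied to $f=P_{\omega}^{\Omega}(u)\overline{\partial}\eta$ is precisely the asserted formula.
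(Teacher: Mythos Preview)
Your proposal is correct and follows essentially the same approach as the paper: both apply the preceding Proposition with $A_{\omega_{0}}=R\circ\left(\overline{\partial}^{*}\mathcal{N}_{\widetilde{\Omega}}\right)\circ I$ and verify continuity, the $\overline{\partial}$-solving property (via holomorphy of the canonical solution in $w$), and orthogonality (via the mean value identity on the Reinhardt fibers to pass between $L^{2}(\widetilde{\Omega})$ and $L^{2}(\Omega,\omega_{0}d\lambda)$). Your treatment of the orthogonality step is slightly more detailed than the paper's, which simply invokes ``the mean value property'' without the interior approximation you spell out, but the arguments are the same in substance.
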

\begin{proof}
By the preceding proposition, it suffices to note that the operator
$R\circ\left(\overline{\partial}^{*}\mathcal{N}_{\widetilde{\Omega}}\right)\circ I$
is continuous from $L_{(0,1)}^{2}\left(\Omega,\omega_{0}d\lambda\right)\cap\ker\overline{\partial}$
into $L^{2}\left(\Omega,\omega_{0}d\lambda\right)$, solves the $\overline{\partial}$-equation
and gives the solution which is orthogonal to holomorphic functions
in that space. But if $f\in L_{(0,1)}^{2}\left(\Omega,\omega_{0}d\lambda\right)\cap\ker\overline{\partial}$
then, by Fubini's theorem, $I(f)\in L_{(0,1)}^{2}(\widetilde{\Omega})\cap\ker\overline{\partial}$,
and $\left(\overline{\partial}^{*}\mathcal{N}_{\widetilde{\Omega}}\right)\circ I(f)$
is the solution of $\overline{\partial}u=I(f)$ which is orthogonal
to holomorphic functions in $L^{2}(\widetilde{\Omega})$ and satisfies
\[
\left\Vert \left(\overline{\partial}^{*}\mathcal{N}_{\widetilde{\Omega}}\right)\circ I(f)\right\Vert _{L^{2}\left(\widetilde{\Omega}\right)}\lesssim\left\Vert I(f)\right\Vert _{L_{(0,1)}^{2}\left(\widetilde{\Omega}\right)}=C\left\Vert f\right\Vert _{L_{(0,1)}^{2}\left(\Omega,\omega_{0}d\lambda\right)}
\]
(recall that $\widetilde{\Omega}$ is pseudoconvex and that the volume of $\left\{ h(w)<-\rho_{0}(z)\right\}$
is equal to $C\omega_{0}(z)$). As $I(f)$ is independent
of the variable $w$, $\left(\overline{\partial}^{*}\mathcal{N}_{\widetilde{\Omega}}\right)\circ I(f)$
is holomorphic in $w$ and 
\[
\overline{\partial}_{z}\left(\left(\overline{\partial}^{*}\mathcal{N}_{\widetilde{\Omega}}\right)\circ I(f)\right)(z,0)=f(z)
\]
so $\overline{\partial}\left(R\circ\left(\overline{\partial}^{*}\mathcal{N}_{\widetilde{\Omega}}\right)\circ I(f)\right)=f$,
and, by the mean value property (applied to the subharmonic function
$w\mapsto\left|\left(\overline{\partial}^{*}\mathcal{N}_{\widetilde{\Omega}}\right)\circ I(f)(z,w)\right|^{2}$),
\[
\left\Vert R\circ\left(\overline{\partial}^{*}\mathcal{N}_{\widetilde{\Omega}}\right)\circ I(f)\right\Vert _{L^{2}\left(\Omega,\omega_{0}d\lambda\right)}\leq C\left\Vert \left(\overline{\partial}^{*}\mathcal{N}_{\widetilde{\Omega}}\right)\circ I(f)\right\Vert _{L^{2}\left(\widetilde{\Omega}\right)}\lesssim\left\Vert f\right\Vert _{L_{(0,1)}^{2}\left(\Omega,\omega_{0}d\lambda\right)}.
\]

Moreover, if $g$ is a holomorphic function in $L^{2}\left(\Omega,\omega_{0}d\lambda\right)$,
by the mean value property,
\begin{eqnarray*}
\int_{\Omega}R\circ\left(\overline{\partial}^{*}\mathcal{N}_{\widetilde{\Omega}}\right)\circ I(f)\overline{g}\omega_{0}d\lambda & = & \int_{\Omega}\left(\overline{\partial}^{*}\mathcal{N}_{\widetilde{\Omega}}\right)\circ I(f)(z,0)\overline{g}(z)\omega_{0}(z)d\lambda(z)\\
 & = & C\int_{\Omega}\left(\int_{\left\{ h(w)<-\rho_{0}(z)\right\} }\left(\overline{\partial}^{*}\mathcal{N}_{\widetilde{\Omega}}\right)\right.\\
 & & \quad\qquad\qquad\qquad\circ I(f)(z,w)d\lambda(w)\bigg)\overline{g}(z)d\lambda(z)\\
 & = & C\int_{\widetilde{\Omega}}\left(\overline{\partial}^{*}\mathcal{N}_{\widetilde{\Omega}}\right)\circ I(f)(z,w)\overline{g}(z)d\lambda(z,w)=0.
\end{eqnarray*}

\end{proof}

An immediate density argument shows that:
\begin{cor*}
Let $p\in\left]1,+\infty\right[$. Assume that the following properties
are satisfied:
\begin{itemize}
\item $P_{\omega}^{\Omega}$ and $P_{\omega_{0}}^{\Omega}$ map continuously
$L^{p}\left(\Omega,\omega d\lambda\right)$ into itself;
\item $\overline{\partial}\mathcal{N}_{\widetilde{\Omega}}$ maps continuously
$L^{p}(\widetilde{\Omega})$ into itself.
\end{itemize}
Then \eqref{comparison-P-omega-P-omega0} is valid for any function
$u\in L^{p}\left(\Omega,\omega d\lambda\right)$.

\medskip{}

\end{cor*}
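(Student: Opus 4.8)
The plan is to promote the identity \eqref{comparison-P-omega-P-omega0}, which \propref{Relation-P-omega_P-omega-0} establishes for every $u\in L^{2}\left(\Omega,\omega d\lambda\right)$, to all $u\in L^{p}\left(\Omega,\omega d\lambda\right)$ by approximation. First I would record the underlying density fact: since $\overline{\Omega}$ is compact and $\eta$ is bounded above and below, the weights $\omega$ and $\omega_{0}$ are comparable and the measure $\omega d\lambda$ is finite; on a finite measure space one of $L^{2}$, $L^{p}$ contains the other, so $L^{2}\left(\Omega,\omega d\lambda\right)\cap L^{p}\left(\Omega,\omega d\lambda\right)$ is dense in $L^{p}\left(\Omega,\omega d\lambda\right)$. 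I would then fix $u\in L^{p}\left(\Omega,\omega d\lambda\right)$ and a sequence $u_{j}$ in this intersection with $u_{j}\to u$ in $L^{p}\left(\Omega,\omega d\lambda\right)$; the identity \eqref{comparison-P-omega-P-omega0} holds for each $u_{j}$, and it remains only to pass to the limit in each of its three terms.

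Next I would check that every term, read as an operator in $u$, is continuous from $L^{p}\left(\Omega,\omega d\lambda\right)$ into $L^{p}\left(\Omega,\omega_{0}d\lambda\right)$ (the two weighted $L^{p}$-norms being equivalent by comparability). The left-hand side $u\mapsto\eta P_{\omega}^{\Omega}(u)$ is continuous by the first hypothesis and the boundedness of $\eta$, and likewise $u\mapsto P_{\omega_{0}}^{\Omega}(\eta u)$ is continuous. For the last term I would factor $R\circ\left(\overline{\partial}^{*}\mathcal{N}_{\widetilde{\Omega}}\right)\circ I$ exactly as in the proof of \propref{Relation-P-omega_P-omega-0}, but with the exponent $2$ replaced by $p$: multiplication by the smooth form $\overline{\partial}\eta$ and the operator $P_{\omega}^{\Omega}$ are continuous; $I$ is continuous because Fubini gives $\left\Vert I(f)\right\Vert _{L^{p}(\widetilde{\Omega})}^{p}=C\left\Vert f\right\Vert _{L^{p}\left(\Omega,\omega_{0}d\lambda\right)}^{p}$; and $R$ is continuous because $w\mapsto\left|v(z,w)\right|^{p}$ is plurisubharmonic when $v$ is holomorphic in $w$, so the sub-mean-value inequality yields $\left\Vert R(v)\right\Vert _{L^{p}\left(\Omega,\omega_{0}d\lambda\right)}\lesssim\left\Vert v\right\Vert _{L^{p}(\widetilde{\Omega})}$, precisely as in the $L^{2}$ computation.

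The one genuine input is the $L^{p}$-continuity of the interior solution operator $\overline{\partial}^{*}\mathcal{N}_{\widetilde{\Omega}}$, and this is what the second hypothesis must supply: via the commutation relation $\mathcal{N}_{1}\overline{\partial}=\overline{\partial}\mathcal{N}_{0}$ one has $\overline{\partial}^{*}\mathcal{N}_{1}=\left(\overline{\partial}\mathcal{N}_{0}\right)^{*}$, so the assumed boundedness of $\overline{\partial}\mathcal{N}_{\widetilde{\Omega}}$ transfers, through $L^{p}$--$L^{p'}$ duality, to boundedness of the canonical solution operator applied to the $\overline{\partial}$-closed forms $I(f)$ that actually occur. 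Granting this, all three maps are $L^{p}$-continuous, both sides of \eqref{comparison-P-omega-P-omega0} converge in $L^{p}\left(\Omega,\omega_{0}d\lambda\right)$ along $u_{j}\to u$, and since equality holds for every $u_{j}$ the identity passes to the limit. I expect the main obstacle to be exactly this operator-theoretic step—making sure the hypothesis on $\overline{\partial}\mathcal{N}_{\widetilde{\Omega}}$ genuinely yields the estimate needed for $\overline{\partial}^{*}\mathcal{N}_{\widetilde{\Omega}}$; the density statement and the $I$, $R$ estimates are then routine transcriptions of the $L^{2}$ argument.
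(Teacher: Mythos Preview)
Your approach is the paper's: the authors' entire proof is the single clause ``An immediate density argument shows that'', so you are filling in exactly the argument they had in mind, and your treatment of density, of the $L^{p}$ bounds for $I$ and $R$, and of the two Bergman-projection terms is correct and simply transcribes the $L^{2}$ template from the proof of \propref{Relation-P-omega_P-omega-0}.

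The one point that deserves care is the duality step for $\overline{\partial}^{*}\mathcal{N}_{\widetilde{\Omega}}$, which you yourself flag. Your identity $\overline{\partial}^{*}\mathcal{N}_{1}=\left(\overline{\partial}\mathcal{N}_{0}\right)^{*}$ is correct, but $L^{p}$--$L^{p'}$ duality then gives $L^{p'}$-boundedness of $\overline{\partial}^{*}\mathcal{N}_{1}$ from $L^{p}$-boundedness of $\overline{\partial}\mathcal{N}_{0}$, whereas what is needed is $L^{p}$-boundedness. (There is also a bidegree ambiguity: if the hypothesis is read as $\overline{\partial}\mathcal{N}_{1}$ on $(0,1)$-forms, its Hilbert-space adjoint is $\overline{\partial}^{*}\mathcal{N}_{2}$, not $\overline{\partial}^{*}\mathcal{N}_{1}$.) In the paper this corollary is a side remark and is never invoked as such; whenever formula \eqref{comparison-P-omega-P-omega0} is actually used (proofs of \thmref{Reformulation-General-Sobolev} and \thmref{reformulation-C-2}) the authors appeal directly to regularity of $\overline{\partial}^{*}\mathcal{N}_{\widetilde{\Omega}}$ itself (subelliptic estimates, \thmref{d-bar-Levi-diag}, Cumenge's theorem). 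The cleanest reading is therefore to take the second hypothesis as $L^{p}$-boundedness of $\overline{\partial}^{*}\mathcal{N}_{\widetilde{\Omega}}$ on $\overline{\partial}$-closed $(0,1)$-forms; with that, your density argument goes through without any further operator-theoretic detour.
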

In the proofs of the theorems we need to use weighted Sobolev spaces
$L_{s}^{p}\left(D,\nu d\lambda\right)$ defined for all $s\geq0$
and some directional Sobolev spaces.

It is well known that for $s\in\left[k,k+1\right]$, $k\in\mathbb{N}$,
the fractional Sobolev space $L_{s}^{p}\left(D\right)$ is obtained
using the complex interpolation method between $L_{k}^{p}\left(D\right)$
and $L_{k+1}^{p}\left(D\right)$ (see, for example, \cite{Tri78}).
By analogy, we extend the definition of the weighted Sobolev spaces
$L_{k}^{p}\left(D,\nu d\lambda\right)$ to any index $s\geq0$ using
the complex interpolation method:
\[
L_{s}^{p}\left(D,\nu d\lambda\right)=\left[L_{k}^{p}\left(D,\nu d\lambda\right),L_{k+1}^{p}\left(D,\nu d\lambda\right)\right]_{s-k},\mbox{ if }s\in\left[k,k+1\right].
\]
Note that if $\nu_{1}$ and $\nu_{2}$ are two admissible weights
such that $\nu_{2}=\eta\nu_{1}$ with $\eta$ a strictly positive
function in $\mathcal{C}^{\left[s\right]+1}\left(\overline{\Omega}\right)$
then the Banach spaces $L_{s}^{p}\left(D,\nu_{1}d\lambda\right)$
and $L_{s}^{p}\left(D,\nu_{2}d\lambda\right)$ are identical.

\smallskip{}

For all $s\geq0$, we extend the definition of $L_{k,T}^{p}\left(D,\nu d\lambda\right)$
to $L_{s,T}^{p}\left(D,\nu d\lambda\right)=\left[L_{k,T}^{p}(D,\nu d\lambda),L_{k+1,T}^{p}(D,\nu d\lambda)\right]_{s-k}$,
$k\leq s\leq k+1$ by complex interpolation between two consecutive
integers. Clearly, the spaces $L_{s,T}^{2}\left(D,\nu d\lambda\right)$
are Hilbert spaces and $L_{s,T}^{p}\left(D,\nu d\lambda\right)$
are Banach spaces. When $\nu\equiv1$ we denote this space $L_{s,T}^{p}(D)$.

Note that, $r=\rho_{0}+h$ being the defining function of $\widetilde{\Omega}$,
we have $T_{r}=T_{\rho_{0}}+T_{h}$, with $T_{h}=\left|w\right|^{2q-2}\sum\left(w_{i}\frac{\partial}{\partial w_{i}}-\overline{w_{i}}\frac{\partial}{\partial\overline{w_{i}}}\right)$
when $h(w)=\left|w\right|^{2q}$, $w\in\mathbb{C}^{m}$ and $T_{h}=\sum_{i=1}^{m}\left|w_{i}\right|^{2q_{i}-2}\left(w_{i}\frac{\partial}{\partial w_{i}}-\overline{w_{i}}\frac{\partial}{\partial\overline{w_{i}}}\right)$
when $h(w)=\sum_{i=1}^{m}\left|w_{i}\right|^{2q_{i}}$, $w_{i}\in\mathbb{C}$.
\begin{srrem}
The spaces $L_{s,T}^{p}\left(D,\nu d\lambda\right)$ depend on
the choice of the vector field $T$
(see Section 5 of \cite{Herbig-McNeal_Smoo-Berg_2012}).
\end{srrem}
\medskip{}

We now state some elementary properties of the operators $I$ and
$R$ introduced before and related to these Sobolev spaces. It is
convenient to introduce other spaces: for $1<p<+\infty$ and
$s\geq0$, let 
\[
L_{s}^{p}(\widetilde{\Omega})\cap\ker\overline{\partial}_{w}=\left\{ u(z,w)\in L_{s}^{p}(\widetilde{\Omega})\mbox{ such that }\frac{\partial u}{\partial\overline{w_{i}}}\equiv0,\,1\leq i\leq m\right\} .
\]

\begin{sllem}
\label{lem:I_and_R_global}With the previous notations and for $1<p<+\infty$,
we have:
\begin{enumerate}
\item \label{Reg_I_W-s}For all $s\geq0$, $I$ maps continuously $L_{s}^{p}\left(\Omega,\omega_{0}d\lambda\right)$
into $L_{s}^{p}(\widetilde{\Omega})$.
\item \label{Reg_R_W-k}For all non negative integer $k$, $R$ maps continuously
$L_{k}^{p}(\widetilde{\Omega})\cap\ker\overline{\partial}_{w}$ into
$L_{k}^{p}\left(\Omega,\omega_{0}d\lambda\right)$.
\end{enumerate}
\end{sllem}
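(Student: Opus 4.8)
The plan is to treat the two parts separately, proving the integer cases directly and obtaining the fractional range in (1) by interpolation. Throughout I write $S_z=\{w:h(w)<-\rho_0(z)\}$ for the $w$-slice over $z$, and I use the volume identity $\int_{S_z}d\lambda(w)=C\omega_0(z)$ (with $\omega_0=(-\rho_0)^r$) already recorded in the proof of \propref{Relation-P-omega_P-omega-0}.

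For part (1), I first fix an integer $k$. Since $I(u)(z,w)=u(z)$ is independent of $w$, every derivative of $I(u)$ involving $w$ or $\overline w$ vanishes, while a pure $z$-derivative satisfies $D^\alpha I(u)(z,w)=(D^\alpha u)(z)$. Hence, by Fubini's theorem and the volume identity,
\[
\|I(u)\|_{L^p_k(\widetilde\Omega)}^p=\sum_{|\alpha|\le k}\int_\Omega |D^\alpha u(z)|^p\Big(\int_{S_z}d\lambda(w)\Big)\,d\lambda(z)=C\,\|u\|_{L^p_k(\Omega,\omega_0 d\lambda)}^p,
\]
which is the desired continuity for integer $k$. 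For a general $s\ge0$ I then invoke the interpolation definition of the weighted and unweighted Sobolev spaces: $I$ is bounded both $L^p_k\to L^p_k$ and $L^p_{k+1}\to L^p_{k+1}$, so the complex interpolation functor gives boundedness $[L^p_k,L^p_{k+1}]_\theta\to[L^p_k,L^p_{k+1}]_\theta$, i.e. $I:L^p_s(\Omega,\omega_0 d\lambda)\to L^p_s(\widetilde\Omega)$ for $s=k+\theta$.

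For part (2), fix an integer $k$ and $v\in L^p_k(\widetilde\Omega)\cap\ker\overline\partial_w$. The key observation is that $z$-derivatives preserve holomorphy in $w$: since $\partial_{\overline w_i}$ commutes with the real $z$-derivatives, $\partial_{\overline w_i}(D^\alpha_z v)=D^\alpha_z(\partial_{\overline w_i}v)=0$, so each $D^\alpha_z v$ (for $|\alpha|\le k$) again lies in $\ker\overline\partial_w$ and, for a.e. $z$, is holomorphic in $w$ on $S_z$. Consequently $|D^\alpha_z v(z,\cdot)|^p$ is plurisubharmonic in $w$, the trace at $w=0$ is well defined (as in the very definition of $R$), and $D^\alpha R(v)=R(D^\alpha_z v)$; this last identity I justify by a routine difference-quotient (or power-series-in-$w$) argument. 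It then suffices to establish the pointwise sub-mean-value estimate
\[
|D^\alpha_z v(z,0)|^p\le \frac{C}{\operatorname{vol}(S_z)}\int_{S_z}|D^\alpha_z v(z,w)|^p\,d\lambda(w),
\]
with $C$ independent of $z$, because multiplying by $\omega_0(z)\simeq\operatorname{vol}(S_z)$, integrating in $z$, and summing over $|\alpha|\le k$ yields $\|R(v)\|_{L^p_k(\Omega,\omega_0 d\lambda)}\lesssim\|v\|_{L^p_k(\widetilde\Omega)}$.

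The one genuinely delicate point is the uniformity of the constant in this last inequality when the slices $S_z$ are anisotropic (the case $h(w)=\sum|w_i|^{2q_i}$). I would remove the $z$-dependence by the diagonal complex-linear change of variables $w_i=(-\rho_0(z))^{1/(2q_i)}\zeta_i$, which maps $S_z$ onto the fixed balanced domain $S_1=\{\sum|\zeta_i|^{2q_i}<1\}$ (respectively the unit ball when $h=|w|^{2q}$), preserves plurisubharmonicity and fixes the origin, and scales Lebesgue measure by a constant that cancels between $\operatorname{vol}(S_z)$ and the integral. Thus the estimate for every $S_z$ reduces to the same estimate for the single fixed domain $S_1$; and for $S_1$ it follows at once from the classical sub-mean-value property over a fixed ball $B(0,\varepsilon)\subset S_1$ together with the positivity of $|D^\alpha_z v|^p$, with $C=\operatorname{vol}(S_1)/\operatorname{vol}(B(0,\varepsilon))$. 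This scaling reduction, turning the anisotropic slices into dilates of one fixed model domain, is the step I expect to require the most care.
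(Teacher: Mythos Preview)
Your proof is correct and follows essentially the same route as the paper's: Fubini plus interpolation for (1), and the sub-mean-value inequality for the plurisubharmonic function $|D_z^\alpha v(z,\cdot)|^p$ over the slice $S_z$, followed by integration in $z$, for (2). Your scaling reduction for the anisotropic slices is correct but unnecessary, and the paper does not bother with it: since each $S_z$ is invariant under $w\mapsto e^{i\theta}w$, integrating the one-variable circle inequality $\phi(0)\le\frac{1}{2\pi}\int_0^{2\pi}\phi(e^{i\theta}w)\,d\theta$ over $w\in S_z$ and using that rotation preserves $S_z$ and Lebesgue measure already yields $\phi(0)\le|S_z|^{-1}\int_{S_z}\phi$ with constant exactly $1$, so the same constant $C$ from the volume identity appears on both sides.
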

\begin{proof}
As $D_{z}^{\alpha}I(h)=I\left(D^{\alpha}h\right)$ for any derivative
$D^{\alpha}$, Fubini's Theorem implies $\left\Vert D_{z}^{\alpha}I(h)\right\Vert _{L^{p}(\widetilde{\Omega})}^{p}=C\left\Vert D^{\alpha}h\right\Vert _{L^{p}\left(\Omega,\omega_{0}d\lambda\right)}^{p}$
and (\ref{Reg_I_W-s}) follows for $s\in\mathbb{N}$ and for all $s$ by the interpolation theorem.

The second point of the lemma is also very simple. If $u\in L_{k}^{p}(\widetilde{\Omega})\cap\ker\overline{\partial}_{w}$,
then, for all derivative $D^{\alpha}$, $D^{\alpha}\left(Ru\right)(z)=D_{z}^{\alpha}u(z,0)$,
and $w\mapsto\left|D_{z}^{\alpha}u(z,w)\right|^{p}$ is subharmonic.
Therefore the mean value property gives
\[
C\left|D^{\alpha}(Ru)(z)\right|^{p}\omega_{0}(z)\leq\int_{\left\{ h(w)<-\rho_{0}(z)\right\} }\left|D_{z}^{\alpha}u(z,w)\right|^{p}d\lambda(w).
\]
Integrating this inequality over $\Omega$ finishes the proof.
\end{proof}
\medskip{}

In Sections \ref{sec:Proof-of-General-Sobolev-L2} and \ref{sec:Proof-of-theorem-C2-convex}
we will need estimates for $R$ on the spaces $L_{s}^{p}(\widetilde{\Omega})\cap\ker\overline{\partial}_{w}$
for all $s\geq0$. Unfortunately the two spaces
$L_{s}^{p}(\widetilde{\Omega})\cap\ker\overline{\partial}_{w}=\left[L_{k}^{p}(\widetilde{\Omega}),L_{k+1}^{p}(\widetilde{\Omega})\right]_{s-k}\cap\ker\overline{\partial}_{w}$
and
$\left[L_{k}^{p}(\widetilde{\Omega})\cap\ker\overline{\partial}_{w},L_{k+1}^{p}(\widetilde{\Omega})\cap\ker\overline{\partial}_{w}\right]_{s-k}$
may be different. This difficulty is circumvented by the following
lemma:
\begin{sllem}
\label{lem:Lemma_I_R}With the previous notations and for $1<p<+\infty$,
we have:
\begin{enumerate}
\item \label{Reg_I_W-s-T}For all $s\geq0$, $I$ maps continuously $L_{s,T_{\rho_{0}}}^{p}\left(\Omega,\omega_{0}d\lambda\right)$
into $L_{s,T_{r}}^{p}(\widetilde{\Omega})$.
\item \label{Reg_R_W-s-T}For all $s\geq0$, $R$ maps continuously $L_{s}^{p}(\widetilde{\Omega})\cap\ker\overline{\partial}_{w}$
into $L_{s,T_{\rho_{0}}}^{p}\left(\Omega,\omega_{0}d\lambda\right)$.
\end{enumerate}
\end{sllem}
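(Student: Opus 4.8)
The plan is to prove the two assertions separately, settling integer indices $s=k$ by hand and then passing to all $s\ge0$ by interpolation. In both cases everything is driven by two elementary facts about the vector field $T_{\rho_{0}}$: it differentiates only the $z$-variables, and it is tangent to $\rho_{0}$, i.e. $T_{\rho_{0}}\rho_{0}\equiv0$.

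For the first assertion \eqref{Reg_I_W-s-T} I would first note that, since $I(u)$ is independent of $w$ and $T_{h}$ involves only $w$-derivatives, $T_{h}(I(u))=0$, while $T_{\rho_{0}}(I(u))=I(T_{\rho_{0}}u)$ because the coefficients of $T_{\rho_{0}}$ depend on $z$ only. Hence $T_{r}^{l}(I(u))=(T_{\rho_{0}}+T_{h})^{l}(I(u))=I(T_{\rho_{0}}^{l}u)$ for every integer $l$, and Fubini's theorem (as already used in \lemref{I_and_R_global}) gives $\left\Vert T_{r}^{l}(I(u))\right\Vert _{L^{p}(\widetilde{\Omega})}^{p}=C\left\Vert T_{\rho_{0}}^{l}u\right\Vert _{L^{p}(\Omega,\omega_{0}d\lambda)}^{p}$. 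Summing over $l\le k$ shows $I$ is, up to a constant, isometric from $L_{k,T_{\rho_{0}}}^{p}(\Omega,\omega_{0}d\lambda)$ into $L_{k,T_{r}}^{p}(\widetilde{\Omega})$; since both scales are \emph{defined} by complex interpolation between consecutive integers, the case of arbitrary $s\ge0$ is immediate from the interpolation theorem, with no kernel constraint entering.

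The second assertion \eqref{Reg_R_W-s-T} is the delicate one, and the main obstacle is precisely the one flagged before the statement: $[L_{k}^{p}(\widetilde{\Omega})\cap\ker\overline{\partial}_{w},\,L_{k+1}^{p}(\widetilde{\Omega})\cap\ker\overline{\partial}_{w}]_{\theta}$ need not coincide with $L_{s}^{p}(\widetilde{\Omega})\cap\ker\overline{\partial}_{w}$, so $R$ cannot be interpolated directly on the kernel spaces. To circumvent this I would extend $R$ to an operator $\widetilde{R}$ defined on \emph{all} of $L_{s}^{p}(\widetilde{\Omega})$, bounded at integer levels, agreeing with $R$ on $\ker\overline{\partial}_{w}$, and then interpolate $\widetilde{R}$ on the honest Sobolev scale and restrict. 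Using the homogeneity of $h$, define the fiberwise dilation $\Phi_{z}(v)=\big((-\rho_{0}(z))^{1/(2q_{i})}v_{i}\big)_{i}$ (respectively $(-\rho_{0}(z))^{1/(2q)}v$ in the ball case), which carries the fixed model fiber $F=\{h(v)<1\}$ onto $\{h(w)<-\rho_{0}(z)\}$ with Jacobian in $v$ equal to $(-\rho_{0}(z))^{r}=\omega_{0}(z)$, and set
\[
\widetilde{R}(u)(z)=\frac{1}{C}\int_{F}u\big(z,\Phi_{z}(v)\big)\,d\lambda(v).
\]
After the change of variables $w=\Phi_{z}(v)$ the weight $\omega_{0}$ cancels, and a Fubini/Jensen argument identical to that in \lemref{I_and_R_global} gives $\left\Vert \widetilde{R}(u)\right\Vert _{L^{p}(\Omega,\omega_{0}d\lambda)}\lesssim\left\Vert u\right\Vert _{L^{p}(\widetilde{\Omega})}$, which is the case $k=0$.

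The decisive point, and the reason the target is the \emph{directional} space $L_{s,T_{\rho_{0}}}^{p}$, is that $T_{\rho_{0}}$ annihilates the dilation: because $T_{\rho_{0}}\rho_{0}\equiv0$, each factor $(-\rho_{0})^{1/(2q_{i})}$ is killed by $T_{\rho_{0}}$, so $T_{\rho_{0}}\Phi_{z}(v)=0$ and the chain rule collapses to $T_{\rho_{0}}^{l}\big[u(z,\Phi_{z}(v))\big]=(T_{\rho_{0}}^{l}u)(z,\Phi_{z}(v))$. Consequently $T_{\rho_{0}}^{l}\widetilde{R}(u)=\widetilde{R}(T_{\rho_{0}}^{l}u)$; applying the $k=0$ bound to $T_{\rho_{0}}^{l}u$ and using $\left\Vert T_{\rho_{0}}^{l}u\right\Vert _{L^{p}(\widetilde{\Omega})}\lesssim\left\Vert u\right\Vert _{L_{k}^{p}(\widetilde{\Omega})}$ (valid since the coefficients of $T_{\rho_{0}}$ are smooth and bounded on $\overline{\Omega}$), then summing over $l\le k$, yields boundedness of $\widetilde{R}$ from $L_{k}^{p}(\widetilde{\Omega})$ into $L_{k,T_{\rho_{0}}}^{p}(\Omega,\omega_{0}d\lambda)$ for every integer $k$, all identities being first checked on smooth $u$ and extended by density. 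Interpolating $\widetilde{R}$ between consecutive integers is now legitimate, since $[L_{k}^{p}(\widetilde{\Omega}),L_{k+1}^{p}(\widetilde{\Omega})]_{\theta}=L_{s}^{p}(\widetilde{\Omega})$, and gives $\widetilde{R}\colon L_{s}^{p}(\widetilde{\Omega})\to L_{s,T_{\rho_{0}}}^{p}(\Omega,\omega_{0}d\lambda)$ continuously for all $s\ge0$. Finally, for $u\in L_{s}^{p}(\widetilde{\Omega})\cap\ker\overline{\partial}_{w}$ the slice $u(z,\cdot)$ is holomorphic and $F$ is circularly symmetric, so the mean value property forces $\widetilde{R}(u)(z)=u(z,0)=R(u)(z)$; restricting the bound for $\widetilde{R}$ to the kernel is exactly the claimed estimate for $R$.
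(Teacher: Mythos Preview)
Your proof is correct and follows essentially the same route as the paper: your extended operator $\widetilde{R}$ is, after undoing the change of variables $w=\Phi_{z}(v)$, precisely the fiberwise mean $M_{0}u(z)=\frac{1}{C\omega_{0}(z)}\int_{\{h(w)<-\rho_{0}(z)\}}u(z,w)\,d\lambda(w)$ that the paper introduces, and both arguments rest on the same observation that $T_{\rho_{0}}\rho_{0}\equiv0$ forces $T_{\rho_{0}}^{l}$ to commute with this averaging operator, after which one interpolates on the full Sobolev scale $L_{s}^{p}(\widetilde{\Omega})$ and then restricts to $\ker\overline{\partial}_{w}$ via the mean value property. Your dilation formulation has the mild expository advantage of integrating over a fixed domain $F$, so differentiation under the integral sign is entirely transparent, but the substance is identical.
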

\begin{proof}
As $T_{r}^{l}\left(I(h)\right)=I\left(T_{\rho_{0}}^{l}(h)\right)$
Fubini's Theorem gives (\ref{Reg_I_W-s-T}) when $s$ is an integer.
Therefore (\ref{Reg_I_W-s-T}) follows by interpolation.

To see the second point of the lemma, let us denote by $M_{0}u$ the
mean with respect to the variable $w$ of a function $u$ in $L^{p}(\widetilde{\Omega})$
\[
M_{0}u(z)=\frac{1}{C\omega_{0}(z)}\int_{\left\{ h(w)<-\rho_{0}(z)\right\} }u(z,w)d\lambda(w).
\]

As $T_{\rho_{0}}$ is tangent to $\rho_{0}$, we have $T_{\rho_{0}}\left(\omega_{0}\right)\equiv T_{\rho_{0}}\left(\rho_{0}\right)\equiv0$,
and, for all integer $l$ we get
\[
T_{\rho_{0}}^{l}M_{0}u(z)=\frac{1}{C\omega_{0}(z)}\int_{\left\{ h(w)<-\rho_{0}(z)\right\} }T_{\rho_{0}}^{l}u(z,w)d\lambda(w).
\]

Then, by H\"{o}lder inequality we have
\[
C\left|T_{\rho_{0}}^{l}M_{0}u(z)\right|^{p}\omega_{0}(z)\leq\int_{\left\{ h(w)<-\rho_{0}(z)\right\} }\left|T_{\rho_{0}}^{l}u(z,w)\right|^{p}d\lambda
\]
and, integrating this inequality over $\Omega$, we get that $M_{0}$
maps continuously $L_{k}^{p}(\widetilde{\Omega})$ into $L_{k,T_{\rho_{0}}}^{p}\left(\Omega,\omega_{0}d\lambda\right)$.
Therefore, by the interpolation theorem, $M_{0}$ maps continuously
$L_{s}^{p}(\widetilde{\Omega})$ into $L_{s,T_{\rho_{0}}}^{p}\left(\Omega,\omega_{0}d\lambda\right)$
for all $s\geq0$.

This proves (\ref{Reg_R_W-s-T}) of the lemma because, by the mean
value property for holomorphic functions, $M_{0}u=Ru$ when $u\in L_{s}^{p}(\widetilde{\Omega})\cap\ker\overline{\partial}_{w}$.
\end{proof}

\section{\label{sec:Proof-of-General-Sobolev-L2}Proof of Theorems \texorpdfstring{\ref{thm:General_Sobolev_estimates}}{2.1}
and \texorpdfstring{\ref{thm:Cinfini-estimate-conj-holo}}{2.2}}

For convenience, we extend the notation of the vector field $T_{d}$
given at the beginning of \secref{Notations-and-main-results} denoting
by $T_{\psi}$ the vector field
\[
T_{\psi}=\sum\frac{\partial\psi}{\partial\overline{z}_{j}}\frac{\partial}{\partial z_{j}}-\frac{\partial\psi}{\partial z_{j}}\frac{\partial}{\partial\overline{z}_{j}}
\]
where $\psi$ is any function in $\mathcal{C}^{1}(\overline{D})$.

If $T=aT_{\rho}+L$ is the vector field given in \thmref{General_Sobolev_estimates}
then (writing $\rho=\varphi\rho_{0}$) we have $T=a\varphi T_{\rho_{0}}+\left(\rho_{0}T_{\varphi}+L\right)=a\varphi T_{\rho_{0}}+L'$
with $\varphi>0$ on $\overline{\Omega}$ and $L'=L'_{1}+\overline{L'_{2}}$
where $L'_{1}$ and $L'_{2}$ are $\left(1,0\right)$-type vector
fields tangential to $\partial\Omega$. Moreover, writing $a=a'+b$
where $a'$ is nowhere vanishing on $\overline{\Omega}$ and $b$
identically $0$ in a neighborhood of $\partial\Omega$, we get $T=a'\varphi T_{\rho_{0}}+L''$
with $L''=L''_{1}+L''_{2}$ where $L''_{1}$ and $L''_{2}$ are $\left(1,0\right)$-type
vector fields tangential to $\partial\Omega$ and $a'\varphi$ is
nowhere vanishing on $\overline{\Omega}$.

\bigskip{}

We now prove the following reformulation of \thmref{General_Sobolev_estimates}:
\begin{stthm}
\label{thm:Reformulation-General-Sobolev}Let $\Omega$ be as in \thmref{General_Sobolev_estimates}.
Let $k$ be a non negative integer. Let $\rho_{0}$, $\omega_{0}$
and $\omega$ be as at the end of \secref{Notations-and-main-results} with
$\eta\in\mathcal{C}^{k+1}(\overline{\Omega})$. Let $\varphi\in\mathcal{C}^{\infty}(\overline{\Omega})$
a function which is nowhere vanishing on $\overline{\Omega}$ and
let $T=\varphi T_{\rho_{0}}+L$ with $L=L_{1}+L_{2}$ where $L_{1}$
and $L_{2}$ are $\mathcal{C}^{\infty}(\overline{\Omega})$ vector
fields of type $\left(1,0\right)$ tangential to $\partial\Omega$.
Then for $s\in\left[0,k\right]$ the weighted Bergman projection $P_{\omega}^{\Omega}$
maps continuously the directional weighted Sobolev space
$L_{s,T}^{2}\left(\Omega,\omega_{0}d\lambda\right)$
into $L_{s}^{2}\left(\Omega,\omega_{0}d\lambda\right)$.
\end{stthm}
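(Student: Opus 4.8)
The plan is to use the comparison formula \eqref{comparison-P-omega-P-omega0} to trade the weight $\omega$ for the model weight $\omega_{0}$, at the cost of a term governed by the canonical $\overline{\partial}$-solution on the lifted domain $\widetilde{\Omega}$, and then to remove the self-reference on $P_{\omega}^{\Omega}(u)$ by a bootstrap on the Sobolev index fed by the finite type of $\widetilde{\Omega}$. Write $g=P_{\omega}^{\Omega}(u)$; it is holomorphic, so all its $\overline{\partial}$-derivatives vanish. Since $\eta$ and $\eta^{-1}$ lie in $\mathcal{C}^{k+1}(\overline{\Omega})$ with $\eta$ nowhere vanishing, multiplication by $\eta^{\pm1}$ preserves the norms of $L_{s}^{2}(\Omega,\omega_{0}d\lambda)$ and of $L_{s,T_{\rho_{0}}}^{2}(\Omega,\omega_{0}d\lambda)$ for every $s\leq k$, so it is enough to estimate $\eta g$.

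I first establish the special case $\eta\equiv1$, namely the purely directional estimate $P_{\omega_{0}}^{\Omega}\colon L_{s,T}^{2}(\Omega,\omega_{0}d\lambda)\to L_{s}^{2}(\Omega,\omega_{0}d\lambda)$ (for this weight $\overline{\partial}\eta=0$ and the correction term in \eqref{comparison-P-omega-P-omega0} is absent). Splitting any $v$ into its holomorphic part and its orthogonal complement and using the mean value property exactly as in the proof of \propref{Relation-P-omega_P-omega-0} gives the identity $P_{\omega_{0}}^{\Omega}=R\circ P^{\widetilde{\Omega}}\circ I$. I then lift $T=\varphi T_{\rho_{0}}+L$ to a field $\widetilde{T}=\varphi T_{r}+\widetilde{L}$ on $\widetilde{\Omega}$ which is tangential and complex transversal to $\partial\widetilde{\Omega}$ and satisfies $\widetilde{T}(Iv)=I(Tv)$ on $w$-independent functions; since such functions are annihilated by every $w$-derivative, $\widetilde{L}$ may be adjusted by tangential $w$-fields to restore tangency to $\partial\widetilde{\Omega}$. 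By Fubini, as in \lemref{Lemma_I_R}(\ref{Reg_I_W-s-T}), $I$ maps $L_{s,T}^{2}(\Omega,\omega_{0}d\lambda)$ into $L_{s,\widetilde{T}}^{2}(\widetilde{\Omega})$; as $\widetilde{\Omega}$ is smooth, bounded, pseudoconvex and of finite type, Theorem 1.1 of \cite{HMS14} yields $P^{\widetilde{\Omega}}\colon L_{s,\widetilde{T}}^{2}(\widetilde{\Omega})\to L_{s}^{2}(\widetilde{\Omega})$, while \lemref{I_and_R_global}(\ref{Reg_R_W-k}) disposes of $R$. Applied with $T=T_{\rho_{0}}$ to a holomorphic $g$ (so that $P_{\omega_{0}}^{\Omega}g=g$), this records the comparison of norms I will need at the very end: $\lVert g\rVert_{L_{s}^{2}(\Omega,\omega_{0}d\lambda)}\lesssim\lVert g\rVert_{L_{s,T_{\rho_{0}}}^{2}(\Omega,\omega_{0}d\lambda)}$ for holomorphic $g$.

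The heart of the matter is the correction term $R\circ(\overline{\partial}^{*}\mathcal{N}_{\widetilde{\Omega}})\circ I(g\,\overline{\partial}\eta)$. Because $g$ is holomorphic one has $g\,\overline{\partial}\eta=\overline{\partial}(\eta g)$, so $I(g\,\overline{\partial}\eta)=\overline{\partial}\bigl(I(\eta g)\bigr)$ is $\overline{\partial}$-exact on $\widetilde{\Omega}$ and therefore
\[
(\overline{\partial}^{*}\mathcal{N}_{\widetilde{\Omega}})\,I(g\,\overline{\partial}\eta)=\bigl(\mathrm{Id}-P^{\widetilde{\Omega}}\bigr)I(\eta g).
\]
Since $\widetilde{\Omega}$ is of finite type, its canonical solution operator gains $\epsilon>0$ derivatives, i.e. $\lVert(\mathrm{Id}-P^{\widetilde{\Omega}})\phi\rVert_{L_{s}^{2}(\widetilde{\Omega})}\lesssim\lVert\overline{\partial}\phi\rVert_{L_{s-\epsilon}^{2}(\widetilde{\Omega})}$; taking $\phi=I(\eta g)$, using \lemref{I_and_R_global}(\ref{Reg_I_W-s}) and $\eta\in\mathcal{C}^{s+1}$, the right-hand side above is bounded in $L_{s}^{2}(\widetilde{\Omega})$ by $\lVert g\rVert_{L_{s-\epsilon}^{2}(\Omega,\omega_{0}d\lambda)}$. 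As this function lies in $\ker\overline{\partial}_{w}$, \lemref{Lemma_I_R}(\ref{Reg_R_W-s-T}) converts the bound into a directional one on $\Omega$, so that the whole correction term is controlled in $L_{s,T_{\rho_{0}}}^{2}(\Omega,\omega_{0}d\lambda)$ by $\lVert g\rVert_{L_{s-\epsilon}^{2}(\Omega,\omega_{0}d\lambda)}$, that is, by $g$ \emph{strictly below} level $s$.

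Combining the two terms, estimating $\eta g$ in the directional norm $L_{s,T_{\rho_{0}}}^{2}$, and using the holomorphic comparison established above to replace $\lVert g\rVert_{L_{s-\epsilon}^{2}}$ by $\lVert g\rVert_{L_{s-\epsilon,T_{\rho_{0}}}^{2}}$, I arrive at the self-improving inequality
\[
\lVert g\rVert_{L_{s,T_{\rho_{0}}}^{2}(\Omega,\omega_{0}d\lambda)}\lesssim\lVert u\rVert_{L_{s,T}^{2}(\Omega,\omega_{0}d\lambda)}+\lVert g\rVert_{L_{s-\epsilon,T_{\rho_{0}}}^{2}(\Omega,\omega_{0}d\lambda)}.
\]
Starting from $\lVert g\rVert_{L^{2}}\leq\lVert u\rVert_{L^{2}}$ and iterating along the grid $\{j\epsilon\}$ finitely many times gives $\lVert g\rVert_{L_{s,T_{\rho_{0}}}^{2}}\lesssim\lVert u\rVert_{L_{s,T}^{2}}$ for all $s\leq k$; a final application of the holomorphic comparison upgrades this to $\lVert P_{\omega}^{\Omega}(u)\rVert_{L_{s}^{2}(\Omega,\omega_{0}d\lambda)}\lesssim\lVert u\rVert_{L_{s,T}^{2}(\Omega,\omega_{0}d\lambda)}$, and complex interpolation fills in the non-grid values of $s\in[0,k]$. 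I expect the crux to be the self-reference itself: the gain $\epsilon$ must genuinely lower the order of the $g$-term, which is precisely why rewriting $g\,\overline{\partial}\eta$ as $\overline{\partial}(\eta g)$ and invoking subellipticity on $\widetilde{\Omega}$---rather than estimating $\overline{\partial}^{*}\mathcal{N}_{\widetilde{\Omega}}$ crudely---is indispensable, along with the bookkeeping that keeps the directional and full Sobolev norms matched through $I$ and $R$ and the verification that the lifted field $\widetilde{T}$ is tangential and complex transversal to $\partial\widetilde{\Omega}$.
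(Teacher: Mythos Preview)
Your proof is correct and follows essentially the same route as the paper's: the identity $P_{\omega_{0}}^{\Omega}=R\circ P^{\widetilde{\Omega}}\circ I$ together with a lift of $T$ to a tangential complex-transversal field on $\widetilde{\Omega}$ and \cite{HMS14} gives the directional estimate for $P_{\omega_{0}}^{\Omega}$ (the paper's \lemref{P-omega-s_W-T-s}), and then the comparison formula \eqref{comparison-P-omega-P-omega0} combined with the subelliptic gain of $\overline{\partial}^{*}\mathcal{N}_{\widetilde{\Omega}}$, \lemref{Lemma_I_R}(\ref{Reg_R_W-s-T}), and the holomorphic identity $P_{\omega_{0}}^{\Omega}g=g$ closes the bootstrap. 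The only cosmetic differences are that the paper runs an explicit double induction (outer on integer $s$, inner on $l/N$ with $1/N$ below the subellipticity index) rather than writing a single self-improving inequality, and that the paper estimates $\overline{\partial}^{*}\mathcal{N}_{\widetilde{\Omega}}\circ I(g\,\overline{\partial}\eta)$ directly rather than passing through $(\mathrm{Id}-P^{\widetilde{\Omega}})I(\eta g)$, which is of course the same operator on $\overline{\partial}$-exact data.
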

\begin{proof}
With the notations of the end of the preceding section, we choose
$h(w)=\left|w\right|^{2q}$, $w\in\mathbb{C}^{m}$ with $r=\nicefrac{m}{q}$.
Then, by results of \cite{CDM}, $\widetilde{\Omega}$ is a smoothly
bounded pseudoconvex domain in $\mathbb{C}^{n+m}$ of finite type.
First we note that the estimate of the theorem for $P_{\omega_{0}}^{\Omega}$
is a consequence of a theorem of A.-K. Herbig, J. D. McNeal and E. Straube:
\begin{sllem}
\label{lem:P-omega-s_W-T-s}The Bergman projection $P_{\omega_{0}}^{\Omega}$
maps continuously the directional space $L_{s,T}^{2}\left(\Omega,\omega_{0}d\lambda\right)$
into $L_{s}^{2}\left(\Omega,\omega_{0}d\lambda\right)$.\end{sllem}
\begin{proof}[Proof of the lemma]
According to \cite{CDM}, Section 3, we have
\[
P_{\omega_{0}}^{\Omega}=R\circ P^{\widetilde{\Omega}}\circ I,
\]
where $P^{\widetilde{\Omega}}$ is the standard Bergman projection
of $\widetilde{\Omega}$.
\begin{sllem}
\label{lem:complex-transversality-omega-tilde}There exists a vector
field $W=\sum a_{i}\frac{\partial}{\partial w_{i}}+b_{i}\frac{\partial}{\partial\overline{w}_{i}}$
with coefficients $a_{i}$ and $b_{i}$ in $\mathcal{C}^{\infty}(\overline{\widetilde{\Omega}})$
such that $T+W$ is smooth in $\overline{\widetilde{\Omega}}$,
tangential and complex transversal to $\partial\widetilde{\Omega}$.\end{sllem}
\begin{proof}[Proof of \lemref{complex-transversality-omega-tilde}]
This a very simple calculus. $T+W=\varphi T_{\rho_{0}}+L+W=\varphi T_{r}-\varphi T_{\left|w\right|^{2q}}+L+W$,
where $r$ denotes the defining function of $\widetilde{\Omega}$.
As $\varphi T_{r}$ is tangential and complex transversal to $\partial\widetilde{\Omega}$
it is enough to see that the coefficients of $W$ can be chosen so
that the $\left(1,0\right)$ and $\left(0,1\right)$ parts of $-\varphi T_{\left|w\right|^{2q}}+L+W$
are both tangential to $\partial\widetilde{\Omega}$. For example,
the $\left(1,0\right)$ part of this vector field is
\[
-q\varphi\left|w\right|^{2q-2}\sum w_{i}\frac{\partial}{\partial w_{i}}+L_{1}+\sum a_{i}\frac{\partial}{\partial w_{i}},
\]
and it is tangent to $\partial\widetilde{\Omega}$ if
$q^{2}\varphi\left|w\right|^{4q-2}-L_{1}\rho_{0}\equiv\sum qa_{i}\left|w\right|^{2q-2}\overline{w}_{i}$
on $\partial\widetilde{\Omega}$. As $L_{1}$ is tangential to $\partial\Omega$,
$L_{1}\rho_{0}$ vanishes at $\partial\Omega$ and there exists a
function $\psi\in\mathcal{C}^{\infty}(\overline{\Omega})$ such that
$-L_{1}\rho_{0}=\psi_{1}\left(-\rho_{0}\right)$. If $\left(z,w\right)\in\partial\widetilde{\Omega}$
then $-\rho_{0}(z)=\left|w\right|^{2q}$, and, it suffices to choose
\[
a_{i}=\frac{1}{q}w_{i}\left[q^{2}\varphi\left|w\right|^{2q-2}+\psi_{1}\right].
\]
Similarly, the $\left(0,1\right)$ part of $-\varphi T_{\left|w\right|^{2q}}+L+W$
is tangent to $\partial\widetilde{\Omega}$ choosing
$b_{i}=\frac{1}{q}\overline{w}_{i}\left[q^{2}\varphi\left|w\right|^{2q-2}+\psi_{2}\right]$.
\end{proof}
Let us now finish the proof of \lemref{P-omega-s_W-T-s}. By Theorem
1.1 of \cite{HMS14}, for any non negative integer $k$, $P^{\widetilde{\Omega}}$
maps continuously $L_{k,T+W}^{2}(\widetilde{\Omega})$ into $L_{k}^{2}(\widetilde{\Omega})$.
As $\left(T+W\right)^{l}(I(u))=T^{l}(u)$, for each $u\in L_{k,T}^{2}\left(\Omega,\omega_{0}d\lambda\right)$,
we have $I(u)\in L_{k,T+W}^{2}(\widetilde{\Omega})$, and, for $s=k$,
the Lemma follows (\ref{Reg_R_W-k}) of \lemref{I_and_R_global}.
The general case $s\geq0$ is therefore obtained by interpolation.
\end{proof}
\medskip{}

Now we use the formula of \propref{Relation-P-omega_P-omega-0} to
prove \thmref{Reformulation-General-Sobolev}, by induction, for $s\in\left\{ 0,\ldots,k\right\} $,
the general case $s\in\left[0,k\right]$ being then a consequence
of the interpolation theorem. Let us assume the Theorem true for $s-1$
, $0<s\leq k$ and let us prove it for $s$. Let $N$ be an integer
whose inverse is smaller than the index of subellipticity of the $\overline{\partial}$-Neumann
problem of $\widetilde{\Omega}$ (recall that we show in \cite{CDM}
that $\widetilde{\Omega}$ is of finite type).
Let $u\in L_{s,T}^{2}\left(\Omega,\omega_{0}d\lambda\right)$.
To prove that $P_{\omega}^{\Omega}(u)\in L_{s}^{2}\left(\Omega,\omega_{0}d\lambda\right)$,
let us prove, by induction over $l\in\left\{ 0,1,\ldots,N\right\} $
that $P_{\omega}^{\Omega}(u)\in L_{s-1+\nicefrac{l}{N}}^{2}\left(\Omega,\omega_{0}d\lambda\right)$.
Assume $P_{\omega}^{\Omega}(u)\in L_{s-1+\nicefrac{l}{N}}^{2}\left(\Omega,\omega_{0}d\lambda\right)$,
$l\leq N-1$. As $\eta\in\mathcal{C}^{k+1}(\overline{\Omega})$, by
(\ref{Reg_I_W-s}) of \lemref{Lemma_I_R}, $I\left(P_{\omega}^{\Omega}(u)\overline{\partial}\left(\eta\right)\right)\in L_{s-1+\nicefrac{l}{N}}^{2}(\widetilde{\Omega})$.
By subelliptic estimates for the $\overline{\partial}$-Neumann problem
on $\widetilde{\Omega}$, 
\[
\left(\overline{\partial}^{*}\mathcal{N}_{\widetilde{\Omega}}\right)\circ I\left(P_{\omega}^{\Omega}(u)\overline{\partial}\left(\eta\right)\right)\in L_{s-1+\nicefrac{(l+1)}{N}}^{2}(\widetilde{\Omega}).
\]
By (\ref{Reg_R_W-s-T}) of \lemref{Lemma_I_R}, 
\[
R\circ\left(\overline{\partial}^{*}\mathcal{N}_{\widetilde{\Omega}}\right)\circ I\left(P_{\omega}^{\Omega}(u)\overline{\partial}\left(\eta\right)\right)\in L_{s-1+\nicefrac{(l+1)}{N},T_{\rho_{0}}}^{2}\left(\Omega,\omega_{0}d\lambda\right).
\]
By \lemref{P-omega-s_W-T-s} $P_{\omega_{0}}^{\Omega}\left(\eta u\right)\in L_{s}^{2}\left(\Omega,\omega_{0}d\lambda\right)$,
thus, as $\eta^{-1}\in\mathcal{C}^{k+1}(\overline{\Omega})$, \propref{Relation-P-omega_P-omega-0}
gives 
\[
P_{\omega}^{\Omega}(u)\in L_{s-1+\nicefrac{(l+1)}{N},T_{\rho_{0}}}^{2}\left(\Omega,\omega_{0}d\lambda\right),
\]
and, as $P_{\omega_{0}}^{\Omega}\circ P_{\omega}^{\Omega}=P_{\omega}^{\Omega}$,
\lemref{P-omega-s_W-T-s} implies $P_{\omega}^{\Omega}(u)\in L_{s-1+\nicefrac{(l+1)}{N}}^{2}\left(\Omega,\omega_{0}d\lambda\right)$
finishing the proof.\end{proof}
\begin{proof}[Proof of the corollary of \thmref{General_Sobolev_estimates}]
It is enough to see that, if $l_{r}$ is a positive integer such that
$2l_{r}\geq r$ then, for any integer $k\geq l_{r}+1$, for $u\in L_{k,T}^{2}\left(\Omega,\omega_{0}d\lambda\right)$
we have $P_{\omega}^{\Omega}(u)\in L_{k-l_{r}}^{2}(\Omega)$. But
this is a consequence of the theorem and of Theorem 1.1 of \cite{CK03}:
$P_{\omega}^{\Omega}(u)\in L_{k}^{2}\left(\Omega,\omega_{0}d\lambda\right)\subset L_{k}^{2}\left(\Omega,\delta_{\partial\Omega}^{2l_{r}}d\lambda\right)$,
$\delta_{\partial\Omega}$ being the distance to the boundary of $\Omega$,
and a harmonic function in $L_{k}^{2}\left(\Omega,\delta_{\partial\Omega}^{2l_{r}}d\lambda\right)$
is in $L_{k-l_{r}}^{2}(\Omega)$.
\end{proof}
\begin{rem*}
\quad\mynobreakpar
\begin{enumerate}
\item As noted in \cite{Herbig-McNeal_Smoo-Berg_2012}, $\cap_{k\in\mathbb{N}}L_{k,T_{\varphi\rho_{0}}}^{2}\left(\Omega,\omega_{0}d\lambda\right)$
is, in general, strictly larger than $\mathcal{C}^{\infty}(\overline{\Omega})$.
\item In Remark 4.1 (2) of \cite{CDM} we notice that, if $\Omega$ is a
smoothly bounded pseudoconvex domain in $\mathbb{C}^{n}$ (not assumed
of finite type) admitting a defining function $\rho_{1}$ pluri-subharmonic
in $\Omega$ then, by a result of H. Boas and E. Straube (\cite{Boas-Straube-def-psh91})
the weighted Bergman projection $P_{\omega_{1}}^{\Omega}$, $\omega_{1}=\left(-\rho_{1}\right)^{r}$,
maps continuously the Sobolev space $L_{s}^{2}\left(\Omega,\omega_{1}d\lambda\right)$
into themselves.\\
Then, using Theorem 1.1 of \cite{HMS14}, the proof of \lemref{P-omega-s_W-T-s}
shows that $P_{\omega_{1}}^{\Omega}$ maps continuously $L_{s,T_{\rho_{1}}}^{2}\left(\Omega,\omega_{1}d\lambda\right)$
into $L_{s}^{2}\left(\Omega,\omega_{1}d\lambda\right)$, $s\geq0$.
Moreover, the arguments of the proof of the above corollary show that
$P_{\omega_{1}}^{\Omega}$ maps continuously $\cap_{k\in\mathbb{N}}L_{k,T_{\rho_{1}}}^{2}\left(\Omega,\omega_{1}d\lambda\right)$
into $\mathcal{C}^{\infty}(\overline{\Omega})$.\\
If $\rho$ is another defining function of such a domain, we do not
know if $P_{\nu}^{\Omega}$, $\nu=\left(-\rho\right)^{r}$, is $L_{s}^{2}\left(\Omega,\nu d\lambda\right)$
regular.
\end{enumerate}
\end{rem*}

\begin{proof}[Proof of \thmref{Cinfini-estimate-conj-holo}]
It is very similar to the proof of \thmref{Reformulation-General-Sobolev}.
First, by Theorem 1.10 of \cite{HMS14}, $P^{\widetilde{\Omega}}(I(\eta fh))\in\mathcal{C}^{\infty}(\overline{\widetilde{\Omega}})$
so that $P_{\omega_{0}}^{\Omega}(\eta fh)\in\mathcal{C}^{\infty}(\overline{\Omega})$.
Then, by induction, the arguments used in the proof of \thmref{Reformulation-General-Sobolev}
show that, for all non negative integer $k$, $P_{\omega}^{\Omega}(fh)\in L_{k}^{2}\left(\Omega,\omega d\lambda\right)$.
Then, arguing as in the proof of the corollary of \thmref{Reformulation-General-Sobolev}
we conclude that $P_{\omega}^{\Omega}(fh)\in L_{k-l_{r}}^{2}(\Omega)$
which completes the proof.
\end{proof}

\section{\label{sec:Proof-of-theorem-C2-convex}Proof of \texorpdfstring{\thmref{Estimates-C2-rank-Levi}}{Theorem 2.3}
and \texorpdfstring{\propref{Estimates_for_Convex_domains}}{Proposition 2.1}}

The proof, based on the formula of \propref{Relation-P-omega_P-omega-0}
and on estimates for solutions of the $\overline{\partial}$-equation,
is very similar to the one given in the previous section. As only
the case of domains with rank of the Levi form $\geq n-2$ is general
(due to the restriction on the defining function for convex domains)
we will give the proof with some details in this case and only indicate
the steps for the convex case.

As in the preceding section we obtain the Sobolev estimates of \thmref{Estimates-C2-rank-Levi}
proving a stronger directional estimate:
\begin{stthm}
\label{thm:reformulation-C-2}Let $\Omega$ be as \thmref{Estimates-C2-rank-Levi}.
Let $\rho_{0}$, $\omega_{0}$ and $\omega$ be as at the end of \secref{Notations-and-main-results}.
Then:
\begin{enumerate}
\item Let $k$ be a non negative integer. Assume $\eta\in\mathcal{C}^{k+1}(\overline{\Omega})$.
Then, for $1<p<+\infty$ and $s\in\left[0,k\right]$ the weighted
Bergman projection $P_{\omega}^{\Omega}$ maps continuously the directional
weighted Sobolev space $L_{s,T_{\rho_{0}}}^{p}\left(\Omega,\omega_{0}d\lambda\right)$
into $L_{s}^{p}\left(\Omega,\omega_{0}d\lambda\right)$.
\item Let $\alpha\leq1$. Assume $\eta\in\mathcal{C}^{\left[\alpha\right]+1}(\overline{\Omega})$.
Then the weighted Bergman projection $P_{\omega}^{\Omega}$ maps continuously
the lipschitz space $\Lambda_{\alpha}(\Omega)$ into itself.
\end{enumerate}
\end{stthm}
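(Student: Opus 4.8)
The plan is to follow closely the scheme used for \thmref{Reformulation-General-Sobolev} in the preceding section, replacing the subelliptic estimates of the $\overline{\partial}$-Neumann problem (which there supplied an isotropic Sobolev gain of $\nicefrac{1}{N}$ at each step) by the $L^{p}$-Sobolev and lipschitz estimates \emph{with gain} for the canonical solution operator $\overline{\partial}^{*}\mathcal{N}_{\widetilde{\Omega}}$ of the $\overline{\partial}$-equation on $\widetilde{\Omega}$. Concretely, I would choose $h$ (as in \cite{CDM}) so that $\widetilde{\Omega}\subset\mathbb{C}^{n+m}$ is a smoothly bounded pseudoconvex domain of finite type carrying such gain estimates under the rank hypothesis on the Levi form (these being the $L^{p}$/lipschitz counterparts, available from \cite{BG95} and \cite{CDM}, of the subelliptic estimates used before), and then run the induction of \secref{Proof-of-General-Sobolev-L2} in the $L^{p}$ and $\Lambda_{\alpha}$ scales.

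First I would record the base estimates for $P_{\omega_{0}}^{\Omega}$. For part (1) I use $P_{\omega_{0}}^{\Omega}=R\circ P^{\widetilde{\Omega}}\circ I$: since $I(u)$ is independent of $w$ one has $T_{r}^{l}(I(u))=I(T_{\rho_{0}}^{l}u)$, so \lemref{Lemma_I_R}(\ref{Reg_I_W-s-T}) sends $L_{s,T_{\rho_{0}}}^{p}(\Omega,\omega_{0}d\lambda)$ into $L_{s,T_{r}}^{p}(\widetilde{\Omega})$; the directional $L^{p}$-estimate for the standard Bergman projection $P^{\widetilde{\Omega}}$ in the transverse direction $T_{r}$ then lands in $L_{s}^{p}(\widetilde{\Omega})$, and as the output is holomorphic, hence in $\ker\overline{\partial}_{w}$, \lemref{I_and_R_global}(\ref{Reg_R_W-k}) (at integer $s$, then interpolation) brings it back to $L_{s}^{p}(\Omega,\omega_{0}d\lambda)$. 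For part (2) the base estimate $P_{\omega_{0}}^{\Omega}\colon\Lambda_{\alpha}(\Omega)\to\Lambda_{\alpha}(\Omega)$ is Theorem 2.1 of \cite{CDM}.

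Next comes the bootstrap through the formula of \propref{Relation-P-omega_P-omega-0}, $\eta P_{\omega}^{\Omega}(u)=P_{\omega_{0}}^{\Omega}(\eta u)+R\circ\left(\overline{\partial}^{*}\mathcal{N}_{\widetilde{\Omega}}\right)\circ I\left(P_{\omega}^{\Omega}(u)\overline{\partial}\eta\right)$, which is valid on $L^{p}\left(\Omega,\omega d\lambda\right)$ once the hypotheses of the corollary of that proposition are checked (continuity of $P_{\omega}^{\Omega}$ and $P_{\omega_{0}}^{\Omega}$ on $L^{p}$ and of $\overline{\partial}\mathcal{N}_{\widetilde{\Omega}}$ on $L^{p}(\widetilde{\Omega})$, all available here). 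For part (1), I argue by induction on $s\in\{0,\dots,k\}$ and, inside each step, along the finite chain produced by the gain of $\overline{\partial}^{*}\mathcal{N}_{\widetilde{\Omega}}$: if $P_{\omega}^{\Omega}(u)$ is already in some $L^{p}$-Sobolev class then \lemref{I_and_R_global}(\ref{Reg_I_W-s}) puts $I\left(P_{\omega}^{\Omega}(u)\overline{\partial}\eta\right)$ in the same class on $\widetilde{\Omega}$ (using $\eta\in\mathcal{C}^{k+1}(\overline{\Omega})$), the gain estimate raises its index, \lemref{Lemma_I_R}(\ref{Reg_R_W-s-T}) returns that higher index but only in the \emph{directional} space $L_{\cdot,T_{\rho_{0}}}^{p}(\Omega,\omega_{0}d\lambda)$, and since $P_{\omega_{0}}^{\Omega}(\eta u)$ is controlled by the base estimate the formula yields $P_{\omega}^{\Omega}(u)$ in that directional space; the identity $P_{\omega_{0}}^{\Omega}\circ P_{\omega}^{\Omega}=P_{\omega}^{\Omega}$ together with the base directional estimate then upgrades this to the full Sobolev space, closing the induction. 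For part (2) the same formula is used, but here $R$ is the restriction to $\{w=0\}$ and $I$ the extension constant in $w$, both preserving $\Lambda_{\alpha}$ isotropically, so no directional detour is needed: starting from the $L^{p}$-regularity of $P_{\omega}^{\Omega}(u)$ and iterating the lipschitz gain of $\overline{\partial}^{*}\mathcal{N}_{\widetilde{\Omega}}$ (through a Sobolev embedding at the first step), one climbs up to $\Lambda_{\alpha}$, the ceiling being the base term $P_{\omega_{0}}^{\Omega}(\eta u)\in\Lambda_{\alpha}$.

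The main obstacle is securing, in the $L^{p}$-Sobolev and lipschitz scales, both the gain estimates for $\overline{\partial}^{*}\mathcal{N}_{\widetilde{\Omega}}$ and the directional $L^{p}$ regularity of $P^{\widetilde{\Omega}}$ used in the base step: these are genuinely stronger than plain $L^{p}$-Sobolev regularity and hold only because of the geometric hypothesis (rank of the Levi form $\geq n-2$, or the existence of a convex defining function of finite type in the convex case), which is exactly why the statement is confined to these settings and not to all finite type domains as in the $L^{2}$ theorem of \secref{Proof-of-General-Sobolev-L2}. A secondary technical point, the interpolation in the fractional directional scale where $\cap\ker\overline{\partial}_{w}$ does not commute with complex interpolation, is circumvented as before by \lemref{Lemma_I_R}; the convex case runs identically, the only change being the source of the $\overline{\partial}$ gain estimates.
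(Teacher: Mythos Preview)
Your scheme matches the paper's, but there is a circularity at the $s=0$ step. You invoke the corollary of \propref{Relation-P-omega_P-omega-0} to get formula \eqref{comparison-P-omega-P-omega0} on $L^p$, listing among its hypotheses the continuity of $P_\omega^\Omega$ on $L^p(\Omega,\omega d\lambda)$ and declaring this ``available here''. It is not: for general $\omega=\eta\omega_0$ nothing in \cite{CDM} or elsewhere gives it, and it is exactly the $s=0$ case of part (1). The paper establishes it first by a separate bootstrap \emph{in the exponent} $p$: for $p>2$ one has $L^p\subset L^2$ on a bounded domain, so \eqref{comparison-P-omega-P-omega0} already holds by \propref{Relation-P-omega_P-omega-0} itself (no corollary needed); then one shows inductively that $P_\omega^\Omega(u)\in L^{2+l(p-2)/N_p}(\Omega,\omega d\lambda)$ for $l=0,1,\dots,N_p$, using point (3) of \thmref{d-bar-Levi-diag}, namely that $\overline{\partial}^*\mathcal{N}_{\widetilde{\Omega}}$ maps $L^q$ into $L^{q+1/N}$. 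The range $1<p<2$ then follows by self-adjointness of $P_\omega^\Omega$. Only after this $L^p$ regularity is in hand does the Sobolev induction on $s$ run as you describe (and likewise the $\Lambda_\alpha$ argument, which you yourself base on ``the $L^p$-regularity of $P_\omega^\Omega(u)$'').

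A secondary inaccuracy: the gain estimates on $\widetilde{\Omega}$ do not come from \cite{BG95} (which treats $\mathbb{C}^2$) but from Fefferman--Kohn--Machedon \cite{F-K-M-d-bar-Diag-Levi-Form} and Koenig \cite{Koe04} for domains with \emph{locally diagonalizable} Levi form. The essential structural fact, supplied by \cite{CDM}, is that the rank $\geq n-2$ hypothesis on the Levi form of $\Omega$ forces the Levi form of $\widetilde{\Omega}\subset\mathbb{C}^{n+m}$ to be locally diagonalizable; this is what makes \thmref{d-bar-Levi-diag} and the directional estimate \thmref{directional-L-p-Sobolev-loc-diag-cvx} applicable there.
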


We know (\cite{CDM}) that the Levi form of the domain $\widetilde{\Omega}$
is locally diagonalizable at every point of $\partial\widetilde{\Omega}$.
Thus we use the estimates for the $\overline{\partial}$-Neumann problem
obtained by C. L. Fefferman, J. J. Kohn and M. Machedon in 1990 and
by K. Koenig in 2004 for these domains:
\begin{stthm}[\cite{F-K-M-d-bar-Diag-Levi-Form,Koe04}]
\label{thm:d-bar-Levi-diag}Let $D$ be a smoothly bounded pseudoconvex
domain in $\mathbb{C}^{n}$ of finite type whose Levi form is locally
diagonalizable at every boundary point. Then there exists a positive
integer $N$ such that:
\begin{enumerate}
\item For every $\alpha\geq0$, $\overline{\partial}^{*}\mathcal{N}_{D}$
maps continuously the lipschitz space $\Lambda_{\alpha}(D)$ into
$\Lambda_{\alpha+\nicefrac{1}{N}}(D)$;
\item For $1<p<+\infty$ and $s\geq0$, $\overline{\partial}^{*}\mathcal{N}_{D}$
maps continuously the Sobolev space $L_{s}^{p}(D)$ into $L_{s+\nicefrac{1}{N}}^{p}(D)$;
\item For $1<p<+\infty$, $\overline{\partial}^{*}\mathcal{N}_{D}$ maps
continuously $L^{p}(D)$ into $L^{p+\nicefrac{1}{N}}(D)$;
\item For $p$ sufficiently large $\overline{\partial}^{*}\mathcal{N}_{D}$
maps continuously $L^{p}(D)$ into $\Lambda_{0}(D)$.
\end{enumerate}
\end{stthm}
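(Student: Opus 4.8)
Since this statement is quoted from \cite{F-K-M-d-bar-Diag-Levi-Form} and \cite{Koe04}, the plan is to recall the architecture of those proofs rather than reproduce the details. The guiding idea is that, for a finite type domain whose Levi form is locally diagonalizable, the canonical solution operator $\overline{\partial}^{*}\mathcal{N}_{D}$ is a \emph{non-isotropic smoothing operator} of order one, and all four conclusions are mapping properties of such operators. The first reduction is to localize: away from $\partial D$ the $\overline{\partial}$-Neumann problem is elliptic, so interior regularity is classical and, via a partition of unity, the whole difficulty is concentrated in a neighbourhood of an arbitrary boundary point $p_{0}$.

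Near $p_{0}$ one would exploit local diagonalizability to introduce the non-isotropic geometry attached to the Levi form: a quasi-distance $d$ and a family of non-isotropic balls $B(x,\delta)$ in which the complex tangential directions are dilated like $\delta^{1/2},\dots,\delta^{1/(2m)}$ and the complex normal direction like $\delta$, all exponents controlled by the finite type. The core construction is a parametrix for $\overline{\partial}^{*}\mathcal{N}_{D}$ whose Schwartz kernel $K(x,y)$ obeys size and cancellation bounds of non-isotropic type, namely $|K(x,y)|\lesssim \frac{d(x,y)}{|B(x,d(x,y))|}$ together with the analogous estimates after applying non-isotropic derivatives in $x$ and $y$. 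This is precisely the content of \cite{F-K-M-d-bar-Diag-Levi-Form,Koe04}: the finite type hypothesis furnishes the subelliptic estimate that starts the iteration, and local diagonalizability keeps the commutators of the relevant tangential fields under control throughout the non-isotropic pseudodifferential calculus.

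Granting that $\overline{\partial}^{*}\mathcal{N}_{D}$ is realized as such an operator of order one, the four assertions follow formally, the uniform gain $\nicefrac{1}{N}$ recording the worst discrepancy between the non-isotropic distance $d$ and the Euclidean one, $N$ being dictated by the type. On the non-isotropic scale the operator gains exactly one derivative; transporting this to the isotropic scales costs a factor $N$ and yields the Lipschitz statement (1), since an order-one operator of this kind sends non-isotropic $\Lambda_{\alpha}$ to $\Lambda_{\alpha+1}$; the Sobolev statement (2), by commuting isotropic derivatives past the operator and absorbing one of them into the order-one gain; the integrability improvement (3), which is the non-isotropic Hardy--Littlewood--Sobolev inequality (an order-one such operator behaves like a non-isotropic fractional integral, sending $L^{p}$ into $L^{q}$ with $\nicefrac{1}{q}=\nicefrac{1}{p}-\nicefrac{1}{Q}$ for the homogeneous dimension $Q$, which contains the stated $L^{p+\nicefrac{1}{N}}$ conclusion); and the endpoint (4), $L^{p}\to\Lambda_{0}$ for large $p$, as the top-of-scale non-isotropic Sobolev embedding.

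The hard part is entirely the middle step: proving that the canonical solution operator really is such an operator of the stated order, with kernel estimates uniform up to the boundary. This is where the geometry enters essentially, and where local diagonalizability of the Levi form cannot be dispensed with, since it is what allows one to diagonalize the tangential sub-Laplacian model by model and to build the parametrix with controlled error terms. Once the kernel bounds are in hand, everything downstream, including the passage from the non-isotropic to the isotropic Lipschitz and Sobolev scales, is routine harmonic analysis on spaces of homogeneous type.
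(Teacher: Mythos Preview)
Your sketch is a fair summary of the machinery in \cite{F-K-M-d-bar-Diag-Levi-Form,Koe04}, and nothing in it is wrong as an outline. But note that the paper does not attempt to prove this theorem at all: it is quoted as a result from the literature, and the paragraph following the statement simply records where each item is found. Item (1) is attributed to \cite{F-K-M-d-bar-Diag-Levi-Form} (for $\overline{\partial}_b$) and to \cite[Corollary~6.3]{Koe04}; item (2) is the Sobolev gain $L_{s}^{p}\to L_{s+\nicefrac{1}{m}-\varepsilon}^{p}$ from \cite{Koe04}. The paper then obtains (3) and (4) \emph{from} (2) via the classical isotropic Sobolev embeddings: $L^{p}=L_{0}^{p}\to L_{\nicefrac{1}{N}}^{p}\hookrightarrow L^{p+\nicefrac{1}{N}}$ for a suitable $N$, and $L_{\nicefrac{1}{N}}^{p}\hookrightarrow\Lambda_{0}$ once $p$ is large. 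This is more economical than your route, which deduces (3) from a non-isotropic Hardy--Littlewood--Sobolev inequality and (4) from a non-isotropic endpoint embedding; both arguments are valid, but the paper's deduction avoids re-entering the anisotropic calculus once (2) is granted. If you are asked only to justify the use of this theorem in the present paper, the appropriate ``proof'' is a citation plus the one-line Sobolev embedding argument for (3) and (4).
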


The first statement is explicitly stated in \cite{F-K-M-d-bar-Diag-Levi-Form},
for $N$ strictly larger than the type of $D$, for the $\overline{\partial}_{b}$-Neumann
problem at the boundary, and exactly stated in \cite{Koe04} (Corollary
6.3, p. 286). In \cite{Koe04} it is also proved that $\overline{\partial}^{*}\mathcal{N}_{D}$
maps continuously the Sobolev space $L_{s}^{p}(D)$ into $L_{s+\nicefrac{1}{m}-\varepsilon}^{p}(D)$,
where $m$ is the type of $D$ and $\varepsilon>0$. Therefore the
third and fourth statements of the theorem follow the Sobolev embedding
theorems (see, for example, \cite{AF03}).

\smallskip{}

We need also directional Sobolev estimates for the standard Bergman
projection $P^{\widetilde{\Omega}}$. Such estimates have been obtained
for finite type domains in $\mathbb{C}^{2}$ by A. Bonami, D.-C. Chang
and S. Grellier (\cite{BCG96}) and by D.-C. Chang and B. Q. Li (\cite{CDC97})
in the case of decoupled domains of finite type in $\mathbb{C}^{n}$.

Following the proof of Lemma 3.4 of \cite{Charpentier-Dupain-Szego-Barcelone}
but using the integral curve of the real normal to the boundary of
$D$ as in the proof of Theorem 4.2.1 of \cite{BCG96}, instead of
a coordinate in a special coordinate system (also used in \cite{McNeal-Stein-Bergman}),
we easily write $\nabla^{k}P^{D}=\sum P_{i}^{D}T_{d}^{i}$ with ``good''
operators $P_{i}^{D}$ and obtain the following estimate for $P^{D}$:
\begin{stthm}
\label{thm:directional-L-p-Sobolev-loc-diag-cvx}Let $D$ be a smoothly
bounded pseudoconvex domain of finite type in $\mathbb{C}^{n}$ whose
Levi form is locally diagonalizable at every point of $\partial D$.

If $d$ is a defining function of $D$, let $T_{d}=\sum_{i}\frac{\partial d}{\partial\overline{z_{i}}}\frac{\partial}{\partial z_{i}}-\frac{\partial d}{\partial z_{i}}\frac{\partial}{\partial\overline{z_{i}}}$.
Then, for $1<p<+\infty$ and $s\geq0$, the Bergman projection $P^{D}$
of $D$ maps continuously the space $L_{s,T_{d}}^{p}(D)$ into $L_{s}^{p}(D)$.
\end{stthm}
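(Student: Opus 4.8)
The plan is to localize by a partition of unity and to reduce everything to the operator identity $\nabla^{k}P^{D}=\sum_{i=0}^{k}P_{i}^{D}T_{d}^{i}$ with each $P_{i}^{D}$ bounded on $L^{p}(D)$. The interior piece is harmless: on a compact subset of $D$ the function $P^{D}u$ is holomorphic, so all its derivatives are dominated by its $L^{p}$-norm through the mean value/Cauchy estimates, while $\left\Vert P^{D}u\right\Vert _{L^{p}(D)}\lesssim\left\Vert u\right\Vert _{L^{p}(D)}$ is the known $L^{p}$-boundedness of the Bergman projection on these domains (the case $s=0$). Near $\partial D$ I would fix a $(1,0)$ frame $L_{1},\dots,L_{n-1},N$ adapted to the boundary, with the $L_{j}$ complex tangential and $N=\sum_{i}\frac{\partial d}{\partial\overline{z}_{i}}\frac{\partial}{\partial z_{i}}$ the complex normal $(1,0)$ field, so that $T_{d}=N-\overline{N}$. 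The whole point of working with $F:=P^{D}u$ is that $F$ is holomorphic: hence $\overline{\partial}F\equiv0$, every mixed or pure antiholomorphic derivative of $F$ vanishes, and $\nabla^{k}F$ reduces to the pure holomorphic derivatives $\partial_{z}^{\alpha}F$, $\left|\alpha\right|\le k$, each again holomorphic. Moreover the real normal derivative of $F$ coincides with $T_{d}F$ (both equal $NF$, since $\overline{N}F=0$), which is exactly what lets one use the integral curves of the real normal while stating the result in terms of $T_{d}$. Granting the identity, the estimate is immediate,
\[
\left\Vert \nabla^{k}P^{D}u\right\Vert _{L^{p}(D)}\le\sum_{i=0}^{k}\left\Vert P_{i}^{D}T_{d}^{i}u\right\Vert _{L^{p}(D)}\lesssim\sum_{i=0}^{k}\left\Vert T_{d}^{i}u\right\Vert _{L^{p}(D)}=\left\Vert u\right\Vert _{L_{k,T_{d}}^{p}(D)},
\]
and the non-integer $s\in[k,k+1]$ follows by complex interpolation of the single operator $P^{D}$ between the integer endpoints, exactly as the spaces $L_{s,T_{d}}^{p}(D)$ and $L_{s}^{p}(D)$ were defined.

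Two kinds of \emph{good} operators $P_{i}^{D}$ then appear when one expands $\partial_{z}^{\alpha}$ in the frame, the reordering producing only lower-order words through the commutators $[L_{i},L_{j}]$ and $[L_{i},N]$. The words built only from the tangential fields $L_{j}$ correspond to $i=0$: applying complex tangential $(1,0)$ fields to the Bergman kernel in the $z$-variable does not worsen its nonisotropic size, so the associated operators remain bounded on $L^{p}(D)$. This is precisely where the finite-type and locally-diagonalizable-Levi-form hypotheses enter, through the Bergman-kernel estimates of \cite{McNeal-Stein-Bergman,BCG96,CDC97} and, ultimately, the subelliptic gain of \thmref{d-bar-Levi-diag}.

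The factors of $N$ are the delicate ones and must be transferred from $F$ onto $u$. Following the proof of Theorem 4.2.1 of \cite{BCG96} and Lemma 3.4 of \cite{Charpentier-Dupain-Szego-Barcelone}, but parametrizing the boundary neighborhood by the integral curves of the real inner normal, I would realize each transversal derivative as a differentiation of the kernel and integrate it by parts in the $\zeta$-variable. Using that the Bergman kernel is holomorphic in $z$, antiholomorphic in $\zeta$, and symmetric ($B(z,\zeta)=\overline{B(\zeta,z)}$), one rewrites a transversal $z$-derivative of $B$ as a transversal $\zeta$-derivative modulo tangential derivatives of $B$ (again good operators) and bounded error terms; the $\zeta$-integration by parts then deposits a factor $T_{d}$ on $u$. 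Because $T_{d}$ is tangent to $\partial D$ (that is, $T_{d}d\equiv0$), this integration by parts produces no boundary term, which is the structural reason the bookkeeping closes. Iterating the transfer and absorbing the commutators generated at each step into lower-order good operators yields
\[
\nabla^{k}P^{D}=\sum_{i=0}^{k}P_{i}^{D}\,T_{d}^{i}.
\]

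The hard part will be exactly this transfer step together with the attendant bookkeeping: one must verify that, after all the reorderings, the integrations by parts in $\zeta$, and the commutators with the tangential frame, every surviving operator $P_{i}^{D}$ is genuinely bounded on $L^{p}(D)$, uniformly in $i\le k$. This boundedness is not formal; it rests on the nonisotropic Bergman-kernel estimates available precisely for finite-type domains whose Levi form is locally diagonalizable, i.e. on the same circle of ideas as \thmref{d-bar-Levi-diag} and the works \cite{McNeal-Stein-Bergman,BCG96,CDC97}. Once the identity is in hand the proof is complete by the estimate and interpolation argument above.
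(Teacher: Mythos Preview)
Your proposal is correct and follows essentially the same approach as the paper: the paper's proof is a one-sentence sketch preceding the theorem, which says to follow Lemma~3.4 of \cite{Charpentier-Dupain-Szego-Barcelone} using the integral curves of the real normal as in Theorem~4.2.1 of \cite{BCG96} (rather than a special coordinate) to obtain the identity $\nabla^{k}P^{D}=\sum P_{i}^{D}T_{d}^{i}$ with ``good'' operators $P_{i}^{D}$. You have faithfully expanded this sketch, including the interpolation step for non-integer $s$ and the identification of the hard part as the $L^{p}$-boundedness of the $P_{i}^{D}$ via the nonisotropic kernel estimates.
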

\smallskip{}

\begin{proof}[Proof of \thmref{reformulation-C-2}]
Let us first prove the weighted $L^{p}$ regularity of $P_{\omega}^{\Omega}$.
Let $u\in L^{p}\left(\Omega,\omega d\lambda\right)$. Assume for the
moment $p>2$. Let $N_{p}$ be an integer such that $\nicefrac{p-2}{N_{p}}<\nicefrac{1}{N}$
where $N$ is the integer of \thmref{d-bar-Levi-diag} and let us
prove, by induction over $l\in\left\{ 0,\ldots,N_{p}\right\} $ that
$P_{\omega}^{\Omega}(u)\in L^{2+\nicefrac{l(p-2)}{N_{p}}}\left(\Omega,\omega d\lambda\right)$.
Assume that $P_{\omega}^{\Omega}(u)\in L^{2+\nicefrac{l(p-2)}{N_{p}}}\left(\Omega,\omega d\lambda\right)$
for $l<N_{p}$. Then \lemref{I_and_R_global} and \thmref{d-bar-Levi-diag}
give $\overline{\partial}^{*}\mathcal{N}_{\widetilde{\Omega}}\circ I\left(P_{\omega}^{\Omega}(u)\overline{\partial}\eta\right)\in L^{p+\nicefrac{(l+1)(p-2}{N_{p}}}(\widetilde{\Omega})$,
and the second part of \lemref{I_and_R_global} gives the result.
The $L^{p}$ regularity of $P_{\omega}^{\Omega}$ for $1<p<2$ is
then obtained using the fact that $P_{\omega}^{\Omega}$ is self-adjoint.

The $\Lambda_{\alpha}$ regularity is proved similarly. Suppose $u\in\Lambda_{\alpha}(\Omega)$.
Then $u$ belongs to all $L^{p}\left(\Omega,\omega d\lambda\right)$
spaces, $p<+\infty$, and, the $L^{p}\left(\Omega,\omega d\lambda\right)$
regularity of $P_{\omega}^{\Omega}$, \lemref{I_and_R_global} and
the last assertion of \thmref{d-bar-Levi-diag} show that $\overline{\partial}^{*}\mathcal{N}_{\widetilde{\Omega}}\circ I\left(P_{\omega}^{\Omega}(u)\overline{\partial}\eta\right)\in\Lambda_{0}\left(\widetilde{\Omega}\right)$
therefore $P_{\omega}^{\Omega}(u)\in\Lambda_{0}(\Omega)$. Then, using
the first assertion of \thmref{d-bar-Levi-diag} it is easy to prove,
by induction, that $P_{\omega}^{\Omega}(u)\in\Lambda_{\nicefrac{l\alpha}{N_{\alpha}}}(\Omega)$,
$l\in\left\{ 1,\ldots,N_{\alpha}\right\} $, where $N_{\alpha}$ is
a sufficiently large integer.

For the $L_{s}^{p}\left(\Omega,\omega d\lambda\right)$ regularity,
we deduce from \thmref{directional-L-p-Sobolev-loc-diag-cvx} the
following extension of \lemref{P-omega-s_W-T-s}:
\begin{sllem}
$P_{\omega_{0}}^{\Omega}$ maps continuously the space $L_{s,T_{\rho_{0}}}^{p}\left(\Omega,\omega_{0}d\lambda\right)$
into $L_{s}^{p}\left(\Omega,\omega_{0}d\lambda\right)$.
\end{sllem}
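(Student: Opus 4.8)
The plan is to follow verbatim the scheme of \lemref{P-omega-s_W-T-s}, simply replacing its $L^{2}$ directional input by the $L^{p}$ directional estimate \thmref{directional-L-p-Sobolev-loc-diag-cvx}. Recall from \cite{CDM} that $P_{\omega_{0}}^{\Omega}=R\circ P^{\widetilde{\Omega}}\circ I$; since each of $I$, $P^{\widetilde{\Omega}}$ and $R$ is continuous on the relevant $L^{p}$ spaces (the $L^{p}$ boundedness of $P_{\omega_{0}}^{\Omega}$ and of $P^{\widetilde{\Omega}}$ being already at our disposal), this identity, valid on $L^{2}\cap L^{p}$, extends to $L^{p}\left(\Omega,\omega_{0}d\lambda\right)$ by density. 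Recall also that, by \cite{CDM}, $\widetilde{\Omega}$ is pseudoconvex, of finite type, and has a Levi form locally diagonalizable at every boundary point, so \thmref{directional-L-p-Sobolev-loc-diag-cvx} is available for it with defining function $d=r$, i.e. with $T_{d}=T_{r}$.

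First I would dispose of an integer order $s=k$. Starting from $u\in L_{k,T_{\rho_{0}}}^{p}\left(\Omega,\omega_{0}d\lambda\right)$, part (\ref{Reg_I_W-s-T}) of \lemref{Lemma_I_R} gives $I(u)\in L_{k,T_{r}}^{p}(\widetilde{\Omega})$; \thmref{directional-L-p-Sobolev-loc-diag-cvx} then yields $P^{\widetilde{\Omega}}(I(u))\in L_{k}^{p}(\widetilde{\Omega})$. As $P^{\widetilde{\Omega}}(I(u))$ is holomorphic it belongs to $\ker\overline{\partial}_{w}$, so part (\ref{Reg_R_W-k}) of \lemref{I_and_R_global} delivers $P_{\omega_{0}}^{\Omega}(u)=R\circ P^{\widetilde{\Omega}}\circ I(u)\in L_{k}^{p}\left(\Omega,\omega_{0}d\lambda\right)$, together with the corresponding norm bound. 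This proves the lemma for every non negative integer $k$.

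The general case $s\in\left[0,k\right]$ I would then reach by interpolating the single operator $P_{\omega_{0}}^{\Omega}$: it is continuous from $L_{j,T_{\rho_{0}}}^{p}$ into $L_{j}^{p}$ for $j$ and for $j+1$, hence, by the defining property of these spaces as complex interpolation spaces between consecutive integers, continuous from $L_{s,T_{\rho_{0}}}^{p}\left(\Omega,\omega_{0}d\lambda\right)$ into $L_{s}^{p}\left(\Omega,\omega_{0}d\lambda\right)$ whenever $j\leq s\leq j+1$.

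I expect the only real content to lie in \thmref{directional-L-p-Sobolev-loc-diag-cvx}, the directional $L^{p}$-Sobolev estimate for the standard Bergman projection on a locally diagonalizable domain; the rest is the same bookkeeping as in \lemref{P-omega-s_W-T-s}. The step requiring a little care is the final one: $R$ must return a value in the \emph{non-directional} space $L_{s}^{p}\left(\Omega,\omega_{0}d\lambda\right)$, whereas the fractional estimate (\ref{Reg_R_W-s-T}) of \lemref{Lemma_I_R} would only land in the directional space $L_{s,T_{\rho_{0}}}^{p}$, and the non-directional estimate (\ref{Reg_R_W-k}) of \lemref{I_and_R_global} is given only at integer orders. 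Interpolating the composite operator $P_{\omega_{0}}^{\Omega}$, rather than $R$ by itself, is exactly what sidesteps the $\ker\overline{\partial}_{w}$ interpolation obstruction flagged before \lemref{Lemma_I_R}.
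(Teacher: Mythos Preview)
Your proof is correct and follows exactly the route the paper indicates: it says only that the lemma is ``deduced from \thmref{directional-L-p-Sobolev-loc-diag-cvx}'' as ``an extension of \lemref{P-omega-s_W-T-s}'', and you have written out precisely that deduction, chaining (\ref{Reg_I_W-s-T}) of \lemref{Lemma_I_R}, \thmref{directional-L-p-Sobolev-loc-diag-cvx} applied to $\widetilde{\Omega}$ with $T_{d}=T_{r}$, and (\ref{Reg_R_W-k}) of \lemref{I_and_R_global}, then interpolating the composite $P_{\omega_{0}}^{\Omega}$. Your closing remark about why one interpolates $P_{\omega_{0}}^{\Omega}$ rather than $R$ is exactly the reason the paper introduced \lemref{Lemma_I_R} in the first place.
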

As we already know that $P_{\omega}^{\Omega}$ maps $L^{p}\left(\Omega,\omega_{0}d\lambda\right)$
into itself, the proof of the $L_{s,T_{\rho_{0}}}^{p}\left(\Omega,\omega_{0}d\lambda\right)$-$L_{s}^{p}\left(\Omega,\omega_{0}d\lambda\right)$
regularity of $P_{\omega}^{\Omega}$ is identical to the end of the
proof of \thmref{Reformulation-General-Sobolev}.
\end{proof}
\medskip{}

For the convex domains considered in \propref{Estimates_for_Convex_domains}
the scheme of the proof is strictly identical.

For the $L^{p}$ estimate we use the estimates for solutions of the
$\overline{\partial}$-equation given by A. Cumenge:
\begin{thm*}[\cite{Cumenge-estimates-holder,Cumenge-Navanlinna-convex}]
Let $D$ be a smoothly bounded convex domain in $\mathbb{C}^{l}$
of finite type $\tau_{D}$. Then:
\begin{enumerate}
\item For $1\leq p<\tau_{D}l+2$ the restriction of $\overline{\partial}^{*}\mathcal{N}_{D}$
to $\overline{\partial}$-closed $\left(0,1\right)$-forms maps continuously
$L_{\left(0,1\right)}^{p}(D)\cap\ker\overline{\partial}$ into $L^{s}(D)$
with $\nicefrac{1}{s}=\nicefrac{1}{p}-\nicefrac{1}{\tau_{D}l+2}$;
\item For $\tau_{D}l+2<p\leq+\infty$, the restriction of $\overline{\partial}^{*}\mathcal{N}_{D}$
to $\overline{\partial}$-closed $\left(0,1\right)$-forms maps continuously
$L_{\left(0,1\right)}^{p}(D)\cap\ker\overline{\partial}$ into the
lipschitz space $\Lambda_{\alpha}(D)$ with $\alpha=\nicefrac{1}{\tau_{D}}-\nicefrac{(l+\nicefrac{2}{\tau_{D}})}{p}$.
\end{enumerate}
\end{thm*}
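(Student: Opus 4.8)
The plan is to obtain these sharp estimates for the canonical solution operator $\overline{\partial}^{*}\mathcal{N}_{D}$ by comparing it with an \emph{explicit integral solution operator} whose kernel can be controlled by the convex finite type geometry of $D$. First I would fix a Diederich--Forn\ae{}ss (or McNeal) holomorphic support function $S(z,\zeta)$ adapted to $D$ and build, via a Berndtsson--Andersson type integral construction, an operator $T$ with $\overline{\partial}Tf=f$ for every $f\in L_{(0,1)}^{p}(D)\cap\ker\overline{\partial}$. Convexity enters precisely here: it forces a one-sided lower bound on $\mathrm{Re}\,S$ that makes the kernel of $T$ integrable and, more importantly, anisotropically controlled.

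Next I would transfer the estimates from $T$ to $\overline{\partial}^{*}\mathcal{N}_{D}$. Since $\overline{\partial}^{*}\mathcal{N}_{D}f$ is the $L^{2}$-minimal solution of $\overline{\partial}u=f$, it equals $(I-P^{D})Tf$, where $P^{D}$ is the Bergman projection of $D$. Hence the mapping properties of $T$ are inherited by $\overline{\partial}^{*}\mathcal{N}_{D}$ as soon as $P^{D}$ preserves the target spaces $L^{s}(D)$ and $\Lambda_{\alpha}(D)$; for a convex domain of finite type this is exactly the result of McNeal and Stein (\cite{McNeal-Stein-Bergman}), which I would invoke.

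The heart of the matter is the pointwise kernel estimate, and this is where I expect the main obstacle. Using McNeal's anisotropic geometry of $D$ --- the point-dependent radii $\tau_{j}(z,\varepsilon)$, the polydiscs $P_{\varepsilon}(z)$ and the pseudometric they define --- one establishes sharp lower bounds for $S(z,\zeta)$ in terms of the anisotropic distance, together with the matching volume estimates for $P_{\varepsilon}(z)$, uniformly up to $\partial D$. These bounds dominate the singularity of the kernel of $T$ by a fractional-integration kernel adapted to the associated space of homogeneous type, and the finite type $\tau_{D}$ is what makes them quantitative. Proving these uniform support-function and volume estimates is the genuinely technical core, carried out in \cite{Cumenge-estimates-holder,Cumenge-Navanlinna-convex}.

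Finally I would read off the two regimes by balancing this kernel singularity against the volume growth $|P_{\varepsilon}(z)|$. A Hardy--Littlewood--Sobolev/Schur type estimate against the dominating kernel yields the $L^{p}\to L^{s}$ gain with $\nicefrac{1}{s}=\nicefrac{1}{p}-\nicefrac{1}{\tau_{D}l+2}$ in the subcritical range $p<\tau_{D}l+2$, while for $p$ past the critical exponent $\tau_{D}l+2$ the integrated kernel, together with its first derivatives, lands in the anisotropic Lipschitz class and one recovers $\Lambda_{\alpha}(D)$ with $\alpha=\nicefrac{1}{\tau_{D}}-\nicefrac{(l+\nicefrac{2}{\tau_{D}})}{p}$. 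Thus the threshold $\tau_{D}l+2$ is exactly what separates the Sobolev-type gain from the H\"older-type gain.
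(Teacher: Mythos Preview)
The paper does not prove this statement at all: it is quoted as a known result of A.~Cumenge, with the citation \cite{Cumenge-estimates-holder,Cumenge-Navanlinna-convex} in the theorem header, and is then simply invoked as a black box in the proof of \propref{Estimates_for_Convex_domains}. So there is no ``paper's own proof'' to compare against.

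That said, your outline is a faithful sketch of how Cumenge actually obtains these estimates: an explicit Berndtsson--Andersson type solution operator built from a holomorphic support function adapted to the convex finite type geometry (Diederich--Forn\ae{}ss/McNeal), pointwise control of the kernel via McNeal's anisotropic polydiscs $P_{\varepsilon}(z)$ and radii $\tau_{j}(z,\varepsilon)$, and then a fractional-integration/Schur argument yielding the $L^{p}\to L^{s}$ gain below the critical exponent and the H\"older gain above it. The passage from the explicit operator $T$ to the canonical solution $\overline{\partial}^{*}\mathcal{N}_{D}=(I-P^{D})T$ via McNeal--Stein is also the standard route. Nothing in your plan is wrong; it is just that, for the purposes of the present paper, all of this is outsourced to the cited references and no argument is reproduced here.
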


For the lipschitz estimate, as the type of $\widetilde{\Omega}$ is
larger than the type of $\Omega$, we need a general lipschitz spaces
estimate for the solutions of the $\overline{\partial}$-equation.
Using techniques developed in \cite{McNeal-Stein-Bergman,McNeal-Stein-Szego,Charpentier-Dupain-Szego-Barcelone}
and formulas introduced in \cite{CDMb} the following result can be
proved for lineally convex domains of finite type (as the detailed
proof is long, technical and not new, we will note write it here):
\begin{thm*}
Under the conditions of the preceding theorem, for $\alpha\geq0$,
the restriction of $\overline{\partial}^{*}\mathcal{N}_{D}$ to $\overline{\partial}$-closed
$\left(0,1\right)$-forms maps continuously the lipschitz spaces $\Lambda_{\alpha}(D)$
into $\Lambda_{\alpha+\nicefrac{1}{\tau_{D}}}$.
\end{thm*}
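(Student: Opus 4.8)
The plan is to avoid Hilbert-space methods and argue through an explicit integral representation. First I would, following the formulas of \cite{CDMb}, build a solution operator $T$ for $\overline{\partial}$ on $D$ out of a Cauchy--Fantappi\`e (Berndtsson--Andersson) kernel constructed from McNeal's support function for lineally convex domains of finite type, so that $\overline{\partial}(Tf)=f$ for every $\overline{\partial}$-closed $(0,1)$-form $f$, with $Tf(z)=\int_{D}K(z,\zeta)\wedge f(\zeta)$. Since $\overline{\partial}^{*}\mathcal{N}_{D}f$ is the $L^{2}$-minimal solution, it equals $(I-P^{D})(Tf)$ where $P^{D}$ is the Bergman projection of $D$; as $P^{D}$ maps each $\Lambda_{\beta}(D)$ continuously into itself (\cite{McNeal-Stein-Bergman}), the whole statement reduces to proving that $T$ maps $\Lambda_{\alpha}(D)$ continuously into $\Lambda_{\alpha+\nicefrac{1}{\tau_{D}}}(D)$.

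The core of the work is the pointwise analysis of $K$ in the non-isotropic geometry of $\partial D$. I would introduce McNeal's objects---the boundary distance $\delta$, the anisotropic polydisc radii $\tau_{j}(\zeta,\delta)$ adapted to the type $\tau_{D}$, and the associated pseudodistance and non-isotropic balls---and, using the techniques of \cite{McNeal-Stein-Bergman,McNeal-Stein-Szego,Charpentier-Dupain-Szego-Barcelone}, establish size and derivative bounds for $\nabla_{z}^{k}K(z,\zeta)$ in terms of negative powers of these radii. The base case $\alpha=0$ would then follow by a direct integration of these bounds, yielding $|Tf(z)-Tf(z')|\lesssim|z-z'|^{\nicefrac{1}{\tau_{D}}}\|f\|_{\Lambda_{0}}$; the exponent $\nicefrac{1}{\tau_{D}}$ is forced by the maximal contact order $\tau_{D}$ of $\partial D$, which governs the most degenerate direction of the polydiscs.

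For $\alpha\geq1$ I would commute derivatives with $T$. As in the discussion preceding \thmref{directional-L-p-Sobolev-loc-diag-cvx}---differentiating along the integral curves of the real normal, in the spirit of the proof of Theorem 4.2.1 of \cite{BCG96}---one writes $\nabla^{k}T=\sum_{i}T_{i}\,T_{d}^{i}$ modulo lower-order error terms with \emph{good} operators $T_{i}$, so that each isotropic derivative falling on $Tf$ is traded, via integration by parts on the kernel, for a tangential derivative $T_{d}^{i}$ which is absorbed by the non-isotropic H\"older modulus of $f$ together with the decay of $K$. Applying the base-case mechanism to these good operators and reassembling gives $\nabla^{k}(Tf)\in\Lambda_{\alpha+\nicefrac{1}{\tau_{D}}-k}$ at top order and better below, hence $Tf\in\Lambda_{\alpha+\nicefrac{1}{\tau_{D}}}(D)$.

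The hard part will be twofold: first, proving the derivative kernel bounds uniformly in the anisotropic geometry and controlling the boundary and error terms generated when $\nabla_{z}$ is pushed through $K$; and second, converting the gain---naturally measured in McNeal's pseudodistance---back into the isotropic Lipschitz scale, showing it dominates $|z-z'|^{\nicefrac{1}{\tau_{D}}}$ uniformly up to $\partial D$. These computations are lengthy but follow the established pattern of the cited works, which is why only the outline is recorded here.
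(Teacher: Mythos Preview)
The paper does not actually write out a proof of this statement; it explicitly says the detailed proof is long, technical and not new, and only indicates that it follows from the techniques of \cite{McNeal-Stein-Bergman,McNeal-Stein-Szego,Charpentier-Dupain-Szego-Barcelone} combined with the integral formulas of \cite{CDMb}. Your proposal is exactly a fleshed-out version of that same outline---construct the Berndtsson--Andersson type solution operator of \cite{CDMb}, reduce to it via $(I-P^{D})$ and the McNeal--Stein Lipschitz regularity of $P^{D}$, prove kernel estimates in McNeal's anisotropic geometry, and commute derivatives as in Lemma 3.4 of \cite{Charpentier-Dupain-Szego-Barcelone}---so it coincides with the paper's indicated approach.
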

Finally for the Sobolev $L_{s}^{p}\left(\Omega,\omega d\lambda\right)$
estimate, using techniques similar to those used in the previous estimate
it can be shown that, for $1<p<+\infty$ and $s\geq0$, the restriction
of $\overline{\partial}^{*}\mathcal{N}_{D}$ to $\overline{\partial}$-closed
$\left(0,1\right)$-forms maps continuously $L_{s}^{p}(D)$ into $L_{s+\nicefrac{1}{\tau_{D}}}^{p}(D)$.

\bibliographystyle{amsalpha}

\providecommand{\bysame}{\leavevmode\hbox to3em{\hrulefill}\thinspace}
\providecommand{\MR}{\relax\ifhmode\unskip\space\fi MR }
\providecommand{\MRhref}[2]{%
  \href{http://www.ams.org/mathscinet-getitem?mr=#1}{#2}
}
\providecommand{\href}[2]{#2}

\end{document}